\newtheorem{thm}{Theorem}[section]
\newtheorem{lem}[thm]{Lemma}
\newtheorem{prop}[thm]{Proposition}
\newtheorem{cor}[thm]{Corollary}
\newtheorem{conj}[thm]{Conjecture}
\newtheorem{rem}[thm]{Remark}
\begin{document}
\begin{center}
{\large \bf  The inverse Kazhdan-Lusztig polynomial of a matroid}
\end{center}

\begin{center}
Alice L.L. Gao$^1$ and Matthew H.Y. Xie$^2$\\[6pt]

School of Mathematics and Statistics,\\
Northwestern Polytechnical University, Xi'an, Shaanxi 710072, P.R. China

College of Science, \\
Tianjin University of Technology, Tianjin 300384, P. R. China\\[6pt]

Email: $^{1}${\tt llgao@nwpu.edu.cn},
      $^{2}${\tt xie@email.tjut.edu.cn}
\end{center}

\noindent\textbf{Abstract.}
In analogy with the classical Kazhdan-Lusztig polynomials for Coxeter groups, Elias, Proudfoot and Wakefield introduced the concept of Kazhdan-Lusztig polynomials for matroids. It is known that both the classical Kazhdan-Lusztig polynomials and the matroid Kazhdan-Lusztig polynomials can be considered as special cases of the Kazhdan-Lusztig-Stanley polynomials for locally finite posets. In the framework of Kazhdan-Lusztig-Stanley polynomials, we study the inverse of Kazhdan-Lusztig-Stanley functions and define the inverse Kazhdan-Lusztig polynomials for matroids. We also compute these polynomials for boolean matroids and uniform matroids. As an unexpected application of the inverse Kazhdan-Lusztig polynomials, we obtain a new formula to compute the Kazhdan-Lusztig polynomials for uniform matroids. Similar to the Kazhdan-Lusztig polynomial of a matroid, we conjecture that the coefficients of its inverse Kazhdan-Lusztig polynomial are nonnegative and log-concave.

\noindent \emph{AMS Classification 2020:} 05B35

\noindent \emph{Keywords:}  Inverse Kazhdan-Lusztig polynomial,  matroid Kazhdan-Lusztig polynomial,  Kazhdan-Lusztig-Stanley function, characteristic function, uniform matroid.

\section{Introduction}

In the study of the Hecke algebra of Coxeter groups, Kazhdan and Lusztig \cite{Kazhh-Lusztig}
associated to each pair of group elements an integral polynomial, now known as the
Kazhdan-Lusztig polynomial. In \cite{Inverse-KL-AMA1980} Kazhdan and Lusztig  associated another integral polynomial
to each pair of group elements, now known as the inverse Kazhdan-Lusztig polynomial.
In analogy with the Kazhdan-Lusztig polynomial for Coxeter groups,
Elias, Proudfoot and Wakefield \cite{ElisKL2016adv} introduced the Kazhdan-Lusztig polynomials for matroids, which can also be defined for each pair of comparable elements in the lattice of flats. For more information on the matroid Kazhdan-Lusztig polynomials, we refer the reader to \cite{Gedeon2017ejc, Gedeon_proud_2017sem,lxy_2018arxiv,glxyz_2018arxiv}. It is natural to consider whether one can define the inverse Kazhdan-Lusztig polynomials for matroids. The aim of this paper is twofold: theoretical existence of the inverse Kazhdan-Lusztig polynomials for matroids, and practical application of these polynomials for the computation of the matroid Kazhdan-Lusztig polynomial.

Let us first recall the definition of matroid Kazhdan-Lusztig polynomials. We will adopt the same notations as in of Elias, Proudfoot and Wakefield \cite{ElisKL2016adv}. We assume that the reads are familiar with the concept of matroids; see \cite{Oxley2011Oxfordmatroid} for more information. Given a matroid $M$, let $L(M)$ denote the lattice of flats of $M$ and let $\chi_M(t)$ denote its characteristic polynomial. For any flat $F\in L(M)$,  let $M^F$ be the contraction of $M$ at $F$ and let $M_F$ be the localization of $M$ at $F$. Moreover, let $\mathrm{rk}\, M$ denote the rank of $M$. Elias, Proudfoot and Wakefield
proved the following result.

\begin{thm}[{\cite[Theorem 2.2]{ElisKL2016adv}}]\label{KL-prop}
	There is a unique way to assign to each matroid $M$ an element $P_M(t) \in \mathbb{Z}[t]$ such that the following conditions are satisfied:
\begin{itemize}
\item[($i$).] If $\mathrm{rk} \,M=0$, then $P_M(t)=1$.
\item[($ii$).] If $\mathrm{rk} \,M>0$, then $\mathrm{deg} \, P_M(t)< \frac{1}{2}   \mathrm{rk} \, M$.
\item[($iii$).] For every matroid $M$, we have
\begin{align}\label{KL_matroid-epw}
t^{\mathrm{rk}\, M}P_M(t^{-1})=\sum_{F\in L(M)}\chi_{M_F}(t)P_{M^F}(t).
\end{align}
\end{itemize}	
\end{thm}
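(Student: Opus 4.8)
\noindent I would prove this by the standard Kazhdan--Lusztig--Stanley argument: read \eqref{KL_matroid-epw} as an identity in the incidence algebra of the lattice of flats, and induct on rank; uniqueness is then a soft support/degree count, while existence reduces to the fact that the characteristic function is a \emph{$P$-kernel}. For comparable flats $F\le G$ in $L(M)$ the interval $[F,G]$ is the lattice of flats of a minor, and I write $\chi_{[F,G]}(t)$ for its characteristic polynomial and $\overline{f}_{FG}(t):=t^{\mathrm{rk}[F,G]}f_{FG}(t^{-1})$ for the ``reciprocal'' of an incidence-algebra element $f$. Since $\mathrm{rk}\,M^F=\mathrm{rk}\,M-\mathrm{rk}\,F$ and $\mathrm{rk}\,M_F=\mathrm{rk}\,F$, and since in \eqref{KL_matroid-epw} the term $F=\widehat{0}$ equals $P_M(t)$ (because $M_{\widehat{0}}$ has rank $0$ and $M^{\widehat{0}}=M$) while the term $F=\widehat{1}$ equals $\chi_M(t)$ (because $M^{\widehat{1}}$ has rank $0$), writing $n=\mathrm{rk}\,M$ and moving the $F=\widehat{0}$ term to the left recasts $(iii)$ as
\begin{equation}\label{eq:pp-reduced}
t^{n}P_M(t^{-1})-P_M(t)=R_M(t),\qquad R_M(t):=\sum_{\widehat{0}<F\le\widehat{1}}\chi_{M_F}(t)\,P_{M^F}(t).
\end{equation}
Each $M^F$ with $F>\widehat{0}$ has rank $<n$, so by induction the polynomials $P_{M^F}$ are already defined and $R_M$ is a known polynomial, with $\deg R_M=n$ and $[t^n]R_M=1$ coming solely from the $F=\widehat{1}$ term. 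The base case $n=0$ is immediate: $(i)$ forces $P_M=1$, and \eqref{KL_matroid-epw} then reads $1=1$.

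For uniqueness, observe that by $(ii)$ the polynomial $P_M(t)$ is supported in degrees $<n/2$, so $t^{n}P_M(t^{-1})$ is supported in degrees $>n/2$, and when $n$ is even neither side of \eqref{eq:pp-reduced} contributes to the coefficient of $t^{n/2}$. Matching coefficients in \eqref{eq:pp-reduced} therefore forces
\begin{equation}\label{eq:pp-trunc}
P_M(t)=-\sum_{0\le i<n/2}\big([t^{i}]R_M(t)\big)\,t^{i},
\end{equation}
so $P_M$ is completely determined in terms of lower-rank data.

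For existence, define $P_M$ by \eqref{eq:pp-trunc}; then $(i)$ and $(ii)$ hold by fiat, and so do the coefficients of $t^i$ in \eqref{eq:pp-reduced} for $i<n/2$. To get the remaining coefficients, assemble the incidence-algebra elements $\kappa$ (with $\kappa_{FG}(t)=\chi_{[F,G]}(t)$) and $P$ (with $P_{FG}$ the polynomial just constructed for the minor on $[F,G]$). By the inductive hypothesis, $(iii)$ holds for every proper minor, which is exactly the statement that $G:=\overline{P}-\kappa\cdot P$ vanishes off the interval $[\widehat{0},\widehat{1}]$ (the only interval of $L(M)$ of rank $n$); its one possibly nonzero entry is $G_{\widehat{0}\widehat{1}}(t)=t^{n}P_M(t^{-1})-P_M(t)-R_M(t)=:g_M(t)$. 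I would then invoke that $\kappa$ is a $P$-kernel, i.e.\ $\overline{\kappa}\cdot\kappa=\delta$: writing $\eta_{FG}(t)=t^{\mathrm{rk}[F,G]}$ and using the classical expansion $\chi_{[F,G]}(t)=\sum_{F\le H\le G}\mu(F,H)\,t^{\mathrm{rk}[H,G]}$, one gets $\kappa=\mu\cdot\eta$; reciprocation satisfies $\overline{fg}=\overline{f}\,\overline{g}$ and $\overline{\overline{h}}=h$ on the degree-bounded elements in play, with $\overline{\eta}=\zeta$ and $\overline{\mu}_{FG}(t)=t^{\mathrm{rk}[F,G]}\mu(F,G)$, whence $\overline{\kappa}\cdot\kappa=\overline{\mu}\cdot\zeta\cdot\mu\cdot\eta=\overline{\mu}\cdot\eta=\delta$, using $\zeta\cdot\mu=\delta$ and $\sum_H\mu(F,H)=\delta_{FG}$. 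Applying reciprocation to $G=\overline{P}-\kappa\cdot P$ and using $\overline{\kappa}\cdot\kappa=\delta$ gives $\overline{G}=-\overline{\kappa}\cdot G$; evaluating both sides on $[\widehat{0},\widehat{1}]$ (where the right side collapses to $-g_M$ since $G$ is supported on a single interval and $\overline{\kappa}_{FF}=1$) yields $t^{n}g_M(t^{-1})=-g_M(t)$. But $g_M$ has no terms in degrees $<n/2$ by construction, and an anti-self-reciprocal polynomial supported in degrees $\ge n/2$ must vanish (its high part is the negative reciprocal of its low part, which is zero, and its middle coefficient, if any, is zero). Hence $g_M=0$, i.e.\ \eqref{eq:pp-reduced}, and therefore $(iii)$, holds for $M$. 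This closes the induction.

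The one step that is not bookkeeping is the $P$-kernel identity $\overline{\kappa}\cdot\kappa=\delta$ for the characteristic function, together with the discipline of running the induction over all minors of $M$ simultaneously so that the degree bound $(ii)$ and the recursion $(iii)$ stay coupled at every stage; once those are in hand, the statement is also a special case of Stanley's general theorem on Kazhdan--Lusztig--Stanley functions.
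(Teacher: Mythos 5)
Your proposal is correct, and it is essentially the route the paper itself takes: this theorem is quoted from Elias--Proudfoot--Wakefield, and the paper's framework (Section 2) establishes it by observing that the characteristic function $\chi=\zeta^{-1}\overline{\zeta}$ is an $L(M)$-kernel and invoking the general Kazhdan--Lusztig--Stanley existence/uniqueness theorem, with $P_M(t)=f_{\emptyset,E}(t)$. The only difference is presentational: you inline the KLS theorem as an explicit induction on rank (truncation formula for uniqueness, the identity $\overline{\kappa}\kappa=\delta$ via $\kappa=\mu\overline{\zeta}$ plus the anti-palindromy argument for existence), whereas the paper cites that general theorem; your verification of the kernel identity matches the paper's one-line computation $\chi^{-1}=(\overline{\zeta})^{-1}\zeta=\overline{\chi}$.
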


Given a matroid $M$, the uniquely defined polynomial $P_M(t)$ in Theorem \ref{KL-prop} is called the Kazhdan-Lusztig polynomial of $M$.
In this paper we will associate $M$ another integral polynomial  $Q_M(t)$, called the inverse Kazhdan-Lusztig polynomial of $M$, whose existence is guaranteed by the following theorem.

\begin{thm}\label{inverseKL-prop}
	There is a unique way to assign to each matroid $M$ an element $Q_M(t) \in \mathbb{Z}[t]$ such that the following conditions are satisfied:
\begin{itemize}
\item[($i^{\prime}$).] If $\mathrm{rk} \,M=0$, then $Q_M(t)=1$.
\item[($ii^{\prime}$).] If $\mathrm{rk} \,M>0$, then $\mathrm{deg} \, Q_M(t)< \frac{1}{2}   \mathrm{rk} \, M$.
\item[($iii^{\prime}$).] For every matroid $M$, we have
\begin{align}\label{inverse_KL_matroid-2}
t^{\mathrm{rk} \, M} \cdot  (-1)^{\mathrm{rk} \, M} Q_M(t^{-1})=\sum_{F\in L(M)}
 (-1)^{\mathrm{rk} \, M_{F}} Q_{M_F}(t)     \cdot t ^{\mathrm{rk} \, M^F}  \chi_{M^F}(t^{-1}).
\end{align}
\end{itemize}	
\end{thm}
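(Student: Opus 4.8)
The plan is to run the argument inside the incidence algebra and to deduce it from the Kazhdan--Lusztig--Stanley existence theorem, in close parallel with the proof of Theorem~\ref{KL-prop}. Fix a matroid $M$. For flats $F\le G$ in $L(M)$ the interval $[F,G]$ is the lattice of flats of a minor of $M$ which I will denote $M^{F}_{G}$ (obtained by contracting $F$ and then restricting to $G$); thus $M^{\hat 0}_{G}=M_{G}$, $M^{F}_{\hat 1}=M^{F}$, and $\mathrm{rk}\,M^{F}_{G}=\mathrm{rk}\,G-\mathrm{rk}\,F$. Let $I$ be the incidence algebra of $L(M)$ over $\mathbb{Z}[t]$, equipped with the bar involution $\overline{f}_{F,G}(t)=t^{\mathrm{rk}\,M^{F}_{G}}f_{F,G}(t^{-1})$, which is an involutive ring automorphism of $I$ satisfying $\overline{fg}=\overline{f}\,\overline{g}$. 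Set $\kappa_{F,G}(t)=\chi_{M^{F}_{G}}(t)$. As recalled in the introduction, the matroid Kazhdan--Lusztig polynomials are a special case of the Kazhdan--Lusztig--Stanley polynomials: $\kappa$ is a $P$-kernel, i.e.\ $\kappa_{F,F}=1$ and $\overline{\kappa}=\kappa^{-1}$ (equivalently, the characteristic--polynomial identity $\sum_{F\in L(N)}t^{\mathrm{rk}\,N_{F}}\chi_{N_{F}}(t^{-1})\chi_{N^{F}}(t)=\delta_{\mathrm{rk}\,N,0}$ holds for every matroid $N$, which one can also check directly by Möbius inversion on lattices of flats), and $P_{M}(t)=f_{\hat 0,\hat 1}(t)$ for the unique $f\in I$ with $f_{F,F}=1$, $\deg f_{F,G}<\tfrac12\,\mathrm{rk}\,M^{F}_{G}$ whenever $F<G$, and $\overline{f}=\kappa f$.

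The first step is the observation that, because bar is an involutive ring automorphism, the inverse kernel $\overline{\kappa}=\kappa^{-1}$ is again a $P$-kernel: clearly $(\overline{\kappa})_{F,F}=1$, and $\overline{\overline{\kappa}}=\kappa=(\overline{\kappa})^{-1}$. I would then apply the Kazhdan--Lusztig--Stanley existence theorem, in the form with the kernel acting on the right, to $\overline{\kappa}$: this produces a unique $g\in I$ with $g_{F,F}=1$, with $\deg g_{F,G}<\tfrac12\,\mathrm{rk}\,M^{F}_{G}$ whenever $F<G$, and with $\overline{g}=g\,\overline{\kappa}$. Since the recursion determining $g$ on an interval only involves that interval together with the characteristic polynomials of its sub-intervals, $g_{F,G}$ depends only on the isomorphism type of $M^{F}_{G}$; hence for an arbitrary matroid $N$ the quantity $Q_{N}(t):=(-1)^{\mathrm{rk}\,N}g_{\hat 0,\hat 1}(t)$, computed in the incidence algebra of $L(N)$, is well defined, and $g_{F,G}(t)=(-1)^{\mathrm{rk}\,M^{F}_{G}}Q_{M^{F}_{G}}(t)$ for every interval $[F,G]$.

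It then remains to verify $(i')$--$(iii')$ and uniqueness. Conditions $(i')$ and $(ii')$ are immediate from $g_{\hat 0,\hat 1}=(-1)^{\mathrm{rk}\,N}Q_{N}$ together with $g_{F,F}=1$ and the degree bound on $g$. For $(iii')$ I would read the identity $\overline{g}=g\,\overline{\kappa}$ at the pair $(\hat 0,\hat 1)$ of $L(M)$: the left side equals $t^{\mathrm{rk}\,M}g_{\hat 0,\hat 1}(t^{-1})=(-1)^{\mathrm{rk}\,M}t^{\mathrm{rk}\,M}Q_{M}(t^{-1})$, while the right side equals
\begin{align*}
\sum_{F\in L(M)}g_{\hat 0,F}(t)\,\overline{\kappa}_{F,\hat 1}(t)
&=\sum_{F\in L(M)}(-1)^{\mathrm{rk}\,M_{F}}Q_{M_{F}}(t)\cdot t^{\mathrm{rk}\,M^{F}}\chi_{M^{F}}(t^{-1}),
\end{align*}
which is exactly \eqref{inverse_KL_matroid-2}. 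For uniqueness, if $Q'$ is any assignment satisfying $(i')$--$(iii')$ for every matroid, then $g'_{F,G}:=(-1)^{\mathrm{rk}\,M^{F}_{G}}Q'_{M^{F}_{G}}$ lies in $I$, has $g'_{F,F}=1$ and the required degree bound by $(ii')$, and satisfies $\overline{g'}=g'\,\overline{\kappa}$ by $(iii')$ applied to every interval minor; the uniqueness clause of the Kazhdan--Lusztig--Stanley theorem forces $g'=g$, hence $Q'=Q$.

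I expect the only genuine obstacle to be aligning the conventions — the sign $(-1)^{\mathrm{rk}}$ and the two reversals $\chi_{M^{F}}(t)$ versus $t^{\mathrm{rk}\,M^{F}}\chi_{M^{F}}(t^{-1})$ — so that \eqref{inverse_KL_matroid-2} is literally the ``kernel on the right'' Kazhdan--Lusztig--Stanley equation for $\overline{\kappa}$; no new difficulty arises once $\kappa$ is known to be a $P$-kernel. If one prefers an argument that does not explicitly invoke Kazhdan--Lusztig--Stanley theory, the same result follows by induction on $\mathrm{rk}\,M$ as in \cite{ElisKL2016adv}: moving the $F=\hat 1$ term of \eqref{inverse_KL_matroid-2} (which equals $(-1)^{\mathrm{rk}\,M}Q_{M}(t)$) to the left, the degree constraint $(ii')$ determines $Q_{M}$ uniquely from the remaining sum $D_{M}(t)$ provided $t^{\mathrm{rk}\,M}D_{M}(t^{-1})=-D_{M}(t)$; this anti-palindromicity is the one computation worth carrying out, and it is proved by substituting the recursion $(iii')$ for each proper restriction $M|F$ into $t^{\mathrm{rk}\,M}D_{M}(t^{-1})$, interchanging the order of summation, and collapsing the inner sums by the characteristic--polynomial identity above applied to the contractions $M/H$.
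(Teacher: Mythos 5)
Your proposal is correct and takes essentially the same route as the paper: the paper also works in the Kazhdan--Lusztig--Stanley framework with the characteristic function $\chi$ as kernel, and its $Q_M(t)=(-1)^{\mathrm{rk}\,M}(f^{-1})_{\emptyset,E}(t)$ is exactly your $g_{\hat 0,\hat 1}$, since by \eqref{Q_relation_bar} the inverse $f^{-1}$ is precisely the KLS function attached to the inverse kernel $\overline{\chi}$, and the verification of $(i')$--$(iii')$ via the interval-invariance of the minors is the same computation. The only minor variation is uniqueness, which the paper proves by a direct induction on $\mathrm{rk}\,M$ using the degree bound (your fallback argument), whereas you invoke the uniqueness clause of the KLS theorem; both are valid.
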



Comparing Theorem \ref{KL-prop} and Theorem \ref{inverseKL-prop}, it seems that $P_{M}(t)$ and $Q_{M}(t)$ do not make any difference formally. However, these two polynomials will play different roles for exploring the properties of matroids. To make it better to understand this point, we will deal with the matroid Kazhdan-Lusztig polynomials and the inverse Kazhdan-Lusztig polynomials under the general framework of Kazhdan-Lusztig-Stanley polynomials for locally finite posets; see \cite{stanley1992kls,brenti1999kls1,brenti1999kls2,Dye93} for more information. In fact, both the classical Kazhdan-Lusztig polynomials and the Kazhdan-Lusztig polynomials of matroids can be considered  as  special cases of the Kazhdan-Lusztig-Stanley polynomials, see Gedeon, Proudfoot and Young \cite{Gedeon_proud_2017sem}, Wakefield \cite{Wakefield_elec_2018} and Proudfoot \cite{proudfoot2017arxiv_alge-geo}.

It is known that the matroid Kazhdan-Lusztig polynomials posses many conjectured interesting properties. It would be desirable if one can obtain an explicit formula for $P_{M}(t)$. However, the computation of $P_{M}(t)$ is full of challenge, even for simple matroids. For example, the conjecture on the leading coefficients of the Kazhdan-Lusztig polynomials for braid matriods is still open, see \cite{ElisKL2016adv} and \cite{Gedeon_proud_2017sem}. Some progress has been made for computing the matroid Kazhdan-Lusztig polynomials. Gedeon, Proudfoot and Young \cite{Geden2017jcta1} introduced the concept of equivariant Kazhdan-Lusztig polynomials, which proved to be very useful for computing the Kazhdan-Lusztig polynomials for uniform matroids. Lu, Xie and Yang \cite{lxy_2018arxiv} used the method of generating functions to obtain explicit formulas for the Kazhdan-Lusztig polynomials of fan matroids, wheel matroids and whirl matroids. Lee, Nasr and Radcliffe studied the Kazhdan-Lusztig polynomials of $\rho$-removed uniform matroids in \cite{LeeNasrRadcliffe_arxiv_1} and sparse paving matroids in  \cite{LeeNasrRadcliffe_arxiv_2}. By using the concept of $Z$-polynomials, Proudfoot, Xu and Young \cite{Proudfoot_Xu_Young_elc_2018} obtained a faster algorithm for computing the Kazhdan-Lusztig polynomials for braid matroids.
Braden and Vysogorets \cite{Branden_Vysoreet_elec_2020}  gave a new recursive method to compute the matroid Kazhdan-Lusztig polynomials, which was particularly useful for parallel connection matroids, including double-cycle graphs, partial saw graphs and fan graphs.

We would like to point out that the inverse Kazhdan-Lusztig polynomials introduced here are also useful for the computation of the matriod Kazhdan-Lusztig polynomials. We will use the following relations between these two families of polynomials.

\begin{thm}\label{KL-inverse-KL-2}
Given a matroid $M$ of positive rank, let $E$ be the ground set of $M$. Then
\begin{align}
P_M(t)&=-\sum_{E \neq F\in L(M)}P_{M_F}(t) \cdot (-1) ^{\mathrm{rk} \, M^F} Q_{M^F}(t),\label{from_Q_to_P}\\[5pt]
P_{M}(t)&=-\sum_{\emptyset \neq F\in L(M)} (-1)^{\mathrm{rk} \, M_{F}}Q_{M_F}(t)\cdot P_{M^F}(t),\\[5pt]
Q_M(t)&=-\sum_{\emptyset \neq F\in L(M)}(-1)^{\mathrm{rk} \, M_{F}} P_{M_F}(t) \cdot Q_{M^F}(t),\label{from_P_to_Q}\\[5pt]
Q_M(t)&=-\sum_{E \neq F\in L(M)} Q_{M_F}(t)\cdot (-1) ^{\mathrm{rk} \, M^F} P_{M^F}(t).
\end{align}
\end{thm}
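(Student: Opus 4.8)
The plan is to pass to the incidence algebra of $L(M)$ and recognize the four identities as the vanishing off–diagonal entries of the two products $\mathcal{P}R$ and $R\mathcal{P}$, where $R$ is an appropriate sign twist of the inverse Kazhdan–Lusztig function. Fix a matroid $M$ and let $I$ denote the incidence algebra over $\mathbb{Z}[t]$ of its lattice of flats $L(M)$, with convolution product $(fg)(x,y)=\sum_{x\le z\le y}f(x,z)g(z,y)$ and identity $\delta$. Recall the bar involution $\overline{f}(x,y)=t^{\mathrm{rk}(x,y)}f(x,y)(t^{-1})$, which is an involutive $\mathbb{Z}$-algebra homomorphism $I\to I$ and hence commutes with taking inverses. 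Every interval $[F,G]$ of $L(M)$ is the lattice of flats of a minor $M[F,G]$, with $M[\hat0,F]=M_F$ and $M[F,\hat1]=M^F$. Define $\gamma,\mathcal{P},R\in I$ by $\gamma(x,y)=\chi_{M[x,y]}(t)$, $\mathcal{P}(x,y)=P_{M[x,y]}(t)$ and $R(x,y)=(-1)^{\mathrm{rk}(x,y)}Q_{M[x,y]}(t)$. Writing \eqref{KL_matroid-epw} and \eqref{inverse_KL_matroid-2} for each minor $M[F,G]$ translates, term by term, into the incidence-algebra identities $\overline{\mathcal{P}}=\gamma\mathcal{P}$ and $\overline{R}=R\,\overline{\gamma}$; this step is immediate precisely because the two recursions are asserted for \emph{every} matroid. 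By $(i),(i')$ and $\chi(\text{rank }0)=1$ the diagonal entries of $\gamma,\mathcal{P},R$ are all $1$, so each is a unit of $I$; by $(ii),(ii')$, for $x<y$ both $\deg\mathcal{P}(x,y)$ and $\deg R(x,y)$ are $<\tfrac12\mathrm{rk}(x,y)$.

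The crux is to show $R\mathcal{P}=\delta$. From $\overline{\mathcal{P}}=\gamma\mathcal{P}$ we get $\gamma=\overline{\mathcal{P}}\,\mathcal{P}^{-1}$, and applying the bar involution, $\overline{\gamma}=\mathcal{P}\,\overline{\mathcal{P}}^{\,-1}$; from $\overline{R}=R\,\overline{\gamma}$ we get $\overline{\gamma}=R^{-1}\overline{R}$. Equating the two expressions for $\overline{\gamma}$ and rearranging gives $R\mathcal{P}=\overline{R}\,\overline{\mathcal{P}}=\overline{R\mathcal{P}}$, so $k:=R\mathcal{P}$ is bar-invariant. On the other hand $k(x,x)=1$, and for $x<y$ the convolution $k(x,y)=\sum_{x\le z\le y}R(x,z)\mathcal{P}(z,y)$ has degree $<\tfrac12\mathrm{rk}(x,y)$, since every summand does (using the two degree bounds together with additivity of rank on the interval). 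A polynomial $p$ satisfying $p(t)=t^{n}p(t^{-1})$ with $\deg p<n/2$ must vanish when $n\ge 1$; hence $k(x,y)=0$ for all $x<y$, that is, $k=\delta$. As all diagonal entries are units this also yields $\mathcal{P}R=\delta$. (Note that at no point was it necessary to invoke separately that $\gamma$ is a $\mathcal P$-kernel.)

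Finally the four identities are obtained by evaluating $\mathcal{P}R=\delta$ and $R\mathcal{P}=\delta$ at the pair $(\hat0,\hat1)$ of $L(M)$, where $\mathrm{rk}\,M>0$ forces the right side to be $0$. Using $M[\hat0,F]=M_F$, $M[F,\hat1]=M^F$, $\mathrm{rk}(\hat0,F)=\mathrm{rk}\,M_F$, $\mathrm{rk}(F,\hat1)=\mathrm{rk}\,M^F$ and $\mathrm{rk}\,M=\mathrm{rk}\,M_F+\mathrm{rk}\,M^F$, the identity $\mathcal{P}R=\delta$ reads $\sum_{F\in L(M)}P_{M_F}(t)\,(-1)^{\mathrm{rk}\,M^F}Q_{M^F}(t)=0$; isolating the term $F=E$ (where $M_E=M$ and $M^E$ has rank $0$, so $Q_{M^E}=1$) gives \eqref{from_Q_to_P}, while isolating the term $F=\hat0$ (the minimal flat, written $\emptyset$; here $M_{\hat0}$ has rank $0$ so $P_{M_{\hat0}}=1$, and $M^{\hat 0}=M$) gives $(-1)^{\mathrm{rk}\,M}Q_M(t)+\sum_{\emptyset\ne F}P_{M_F}(t)(-1)^{\mathrm{rk}\,M^F}Q_{M^F}(t)=0$, which is \eqref{from_P_to_Q} after multiplying by $(-1)^{\mathrm{rk}\,M}$ and using $\mathrm{rk}\,M+\mathrm{rk}\,M^F\equiv\mathrm{rk}\,M_F\pmod 2$. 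Symmetrically, $R\mathcal{P}=\delta$ reads $\sum_{F\in L(M)}(-1)^{\mathrm{rk}\,M_F}Q_{M_F}(t)P_{M^F}(t)=0$; isolating $F=\hat0$ yields the second identity and isolating $F=E$ yields the fourth.

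The only genuine content is the bar-invariance argument of the middle paragraph; everything else is bookkeeping, and the single point deserving care is that the recursions \eqref{KL_matroid-epw} and \eqref{inverse_KL_matroid-2}—stated only for whole matroids—really do assemble into identities in $I$, which holds because every interval of $L(M)$ is the lattice of flats of a minor. If one prefers to bypass the incidence algebra altogether, the same four identities can be proved directly by strong induction on $\mathrm{rk}\,M$, the induction being driven by exactly the same palindrome-killing observation; the incidence-algebra packaging merely makes the cancellation transparent.
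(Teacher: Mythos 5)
Your argument is correct, and its endgame is exactly the paper's: both proofs reduce the four identities to the off-diagonal vanishing, at the interval $(\emptyset,E)$, of the two convolutions of the $P$-function with the (signed) $Q$-function in the incidence algebra of $L(M)$, followed by isolating the terms $F=E$ and $F=\emptyset$. The difference is in how the inverse relation is secured. The paper already constructed $Q_M(t)=(-1)^{\mathrm{rk}\,M}(f^{-1})_{\emptyset,E}(t)$ in its proof of Theorem \ref{inverseKL-prop} and quotes $P_M(t)=f_{\emptyset,E}(t)$ from the Kazhdan--Lusztig--Stanley framework, so $ff^{-1}=f^{-1}f=\delta$ costs nothing and the proof of Theorem \ref{KL-inverse-KL-2} is two lines; the minor-invariance needed to read $f_{\emptyset,F}$, $(f^{-1})_{F,E}$ as $P_{M_F}$, $\Hat{Q}_{M^F}$ is recorded in \eqref{eq-invariant-identity}. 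You instead take only the axiomatic characterizations of $P$ and $Q$, assemble them (using the same minor-invariance, which you rightly flag) into $\overline{\mathcal{P}}=\gamma\mathcal{P}$ and $\overline{R}=R\,\overline{\gamma}$, and then prove $R\mathcal{P}=\delta$ directly by the bar-invariance-plus-degree argument, not even using that $\gamma$ is a kernel. That middle step is sound (the degree bound on each convolution summand and the ``palindrome-killing'' observation are exactly right, and left/right inverses agree since the diagonal entries are units), and it makes the theorem self-contained relative to Theorems \ref{KL-prop} and \ref{inverseKL-prop} alone; the cost is that you re-prove, inside this theorem, the identification that the paper established once and for all in the proof of Theorem \ref{inverseKL-prop}, namely that the polynomials characterized by ($i'$)--($iii'$) are the signed entries of $f^{-1}$. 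Either packaging is acceptable; the paper's is shorter given its setup, yours isolates more clearly that the four identities need nothing beyond the two defining recursions and the degree constraints.
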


The reason for this applicability is so that the above relations provide more flexibility. For instance, for some matroid $M$ and any flat $F\neq E$ the polynomial $P_{M_F}(t)$ in \eqref{from_Q_to_P} can be easily computed, and moreover $Q_{M^F}(t)$ can be easily computed by using \eqref{inverse_KL_matroid-2}.
In this paper we will take the uniform matroids as examples to illustrate this application.
In fact, our method reduces the computation of the Kazhdan-Lusztig polynomials and the inverse Kazhdan-Lusztig polynomials for uniform matroids to that for boolean matroids, which turns out to be very simple.

This paper is organized as follows. In Section  \ref{sec-2} we will prove Theorems \ref{inverseKL-prop} and \ref{KL-inverse-KL-2} from the viewpoint of Kazhdan-Lusztig-Stanley polynomials. In Section \ref{sec-3} we will compute the inverse Kazhdan-Lusztig polynomials for boolean matroids and uniform matroids. In particular, we derive a new formula for computing the Kazhdan-Lusztig polynomial for uniform matroids. In Section \ref{sec-4}, we will propose some conjectures on the inverse Kazhdan-Lusztig polynomials for further study.

\section{The inverse Kazhdan-Lusztig polynomial}\label{sec-2}

In this section we aim to prove Theorem \ref{inverseKL-prop} and Theorem \ref{KL-inverse-KL-2}. To this end, we will first follow Proudfoot \cite{proudfoot2017arxiv_alge-geo} to give an overview of the theory of Kazhdan-Lusztig-Stanley polynomials for general locally finite posets, and then apply the related theory to the lattices of flats of matroids to prove our main results.

\subsection{The Kazhdan-Lusztig-Stanley polynomial}

This subsection aims to review the general theory of Kazhdan-Lusztig-Stanley polynomials.
We adopt the notations of \cite{proudfoot2017arxiv_alge-geo}.  Given a poset $P$, we say that $P$ is locally finite, if for all $x\leq z \in P$, the set
$$[x,z]:=\{y \in P | x \leq y \leq  z \}$$
is finite. Here we often say that $[x,z]$ is an  interval of $P$.
From now on we assume that $P$ is always locally finite.
Let $\mathrm{Int}(P)$ denote the set of all intervals of $P$.
Then let  $I(P)$ be the set of all functions
$$f: \mathrm{Int}(P)   \rightarrow \bigoplus_{[x,z]\in \mathrm{Int}(P)}\mathbb{Z}[t],$$
where $f$ is defined by letting
$f([x,z]) \in \mathbb{Z}[t]$ for any $[x,z]\in \mathrm{Int}(P)$.
For any $f \in I(P)$ and $[x,z]\in \mathrm{Int}(P)$, we often write $f([x,z])$ as  $f_{xz}(t)$ for shortage, and say that  $f_{xz}(t)$ is  the  component of $f$ at   interval $[x,z]$.
It is known that $I(P)$ admits a ring structure with product given by the convolution:
\begin{align}\label{eq-conv-product}
(fg)_{xz}(t):=\sum_{x\leq y \leq z}f_{xy}(t)\cdot g_{yz}(t)
\end{align}
for any $f,g \in I(P)$ and any  interval $[x,z]\in \mathrm{Int}(P)$.
Moreover, the identity element in $I(P)$ is the function $\delta$ with the property that $\delta_{xz} = 1$ if $x=z$ and 0 otherwise.

We say that $P$ is a weakly ranked poset if it is locally finite and there exists a function $r \in I(P)$, called a weak rank function, such that it satisfies the following conditions:
\begin{itemize}
\item
$r_{xy}\in \mathbb{Z}$ for all $x\leq y \in P$,
\item
if $x<y$, then $r_{xy}>0$,
\item if $x\leq y \leq z$, then $r_{xy}+r_{yz}=r_{xz}$.
\end{itemize}

For any weakly ranked poset $P$, let $\mathscr{I}(P) \subset I(P)$ denote the subring of functions $f$ with the property that
$\mathrm{deg}~f_{xz}(t) \leq   r_{xz} $
  for all $x\leq z \in P$.
The ring $\mathscr{I}(P)$ admits an involution $f \longrightarrow \overline{f}$ defined by the formula
$$\overline{f}_{xz}(t):=t^{r_{xz}}f_{xz}(t^{-1}).$$
Moreover, let   $\mathscr{I}_{1/2}(P) \subset \mathscr{I}(P)$ denote the set of functions of $f$ with the property that
$ f_{xx}(t)=1$ for all $ x\in P$ and $\mathrm{deg}~f_{xz}(t)< \frac{1}{2} \, r_{xz} $ for all $ x<z \in P$.

It is known that
an element $f \in I(P)$ has an inverse (left or right) if and only if $f_{xx}(t)=\pm 1$ for all $x\in P$;  see \cite[Lemma 2.1 ]{proudfoot2017arxiv_alge-geo}. In this case, the left and right inverses are unique and they coincide.
Moreover, if $f \in \mathscr{I}(P)$ is invertible, then  $f^{-1} \in \mathscr{I} (P )$. An element $\kappa \in \mathscr{I}(P)$ is called   a $P$-kernel if $\kappa_{xx}(t)=1$  for all $x\in P$ and $\overline{\kappa}=\kappa^{-1}$.
We have the following result, as stated in \cite[Theorem 2.2]{proudfoot2017arxiv_alge-geo}.

\begin{thm}[{\cite[Corollary 6.7]{stanley1992kls}, \cite[Proposition 1.2]{Dye93} and \cite[Theorem 6.2]{brenti1999kls1}}\label{right-left-KLS}] Suppose that $P$ is a weakly ranked poset and $\kappa \in \mathscr{I}(P)$ is a $P$-kernel. Then there exists a unique pair of functions $f,g \in \mathscr{I}_{1/2}(P)$ such that
\begin{align}\label{eq-right-left-kls}
\overline{f}=\kappa f \qquad \mbox{and} \qquad \overline{g}=g \kappa.
\end{align}
\end{thm}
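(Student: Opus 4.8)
The statement is the foundational existence–uniqueness theorem of Kazhdan–Lusztig–Stanley theory (attributed in the excerpt to Stanley, Dye and Brenti), and the plan is the standard componentwise induction on the rank $r_{xz}$ of an interval. I describe the argument for $f$; the one for $g$ is obtained by replacing left convolution by right convolution throughout and using the identity $\kappa\overline{\kappa}=\delta$ in place of $\overline{\kappa}\kappa=\delta$ (both hold because inverses in $I(P)$ are two-sided).

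Reading $\overline{f}=\kappa f$ in the component at an interval $[x,z]$ and isolating the $y=x$ term (where $\kappa_{xx}=1$) gives
$$t^{r_{xz}}f_{xz}(t^{-1})-f_{xz}(t)=\sum_{x<y\le z}\kappa_{xy}(t)\,f_{yz}(t).$$
For \emph{uniqueness}: if $f,f'\in\mathscr{I}_{1/2}(P)$ both solve $\overline{f}=\kappa f$ and agree on all intervals of rank $<r_{xz}$ (automatic when $r_{xz}=0$, and then by induction, since each $[y,z]$ with $x<y\le z$ has strictly smaller rank), subtracting the two instances of the displayed identity yields $t^{r_{xz}}w(t^{-1})=w(t)$ with $w:=f_{xz}-f'_{xz}$ of degree $<\tfrac12 r_{xz}$; were $w\ne 0$, the polynomial $t^{r_{xz}}w(t^{-1})$ would have minimal exponent $r_{xz}-\deg w>\tfrac12 r_{xz}>\deg w$, contradicting $t^{r_{xz}}w(t^{-1})=w(t)$, so $w=0$.

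For \emph{existence}, set $f_{xx}=1$ and, having defined $f$ on all intervals of rank $<n$ in such a way that $\overline{f}_{yw}=(\kappa f)_{yw}$ on those intervals, fix $[x,z]$ with $r_{xz}=n$ and put $B(t):=\sum_{x<y\le z}\kappa_{xy}(t)f_{yz}(t)$, a polynomial of degree $\le n$ (since $\deg\kappa_{xy}\le r_{xy}$, $\deg f_{yz}<\tfrac12 r_{yz}$ and $r_{xy}+r_{yz}=n$). The crucial point is the antisymmetry $t^{n}B(t^{-1})=-B(t)$. Granting it and writing $B=\sum_i b_i t^i$, the antisymmetry says $b_j=-b_{n-j}$ for all $j$ (so $b_{n/2}=0$ when $n$ is even); hence $f_{xz}(t):=-\sum_{0\le i<n/2}b_i t^i\in\mathbb{Z}[t]$ has degree $<\tfrac12 n$ and satisfies $t^{n}f_{xz}(t^{-1})-f_{xz}(t)=B(t)$, i.e.\ $\overline{f}_{xz}=f_{xz}+B=(\kappa f)_{xz}$. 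Thus the induction goes through and the resulting $f$ lies in $\mathscr{I}_{1/2}(P)$ by construction.

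The step I expect to be the main obstacle — and the only place where the $P$-kernel hypothesis $\overline{\kappa}=\kappa^{-1}$ is used — is this antisymmetry of $B$. Since bar is a ring automorphism of $\mathscr{I}(P)$ and $r_{xy}+r_{yz}=r_{xz}$, one has $t^{n}B(t^{-1})=\sum_{x<y\le z}\overline{\kappa}_{xy}(t)\,\overline{f}_{yz}(t)$. Substituting $\overline{f}_{yz}=(\kappa f)_{yz}=\sum_{y\le w\le z}\kappa_{yw}f_{wz}$ (legitimate since $r_{yz}<n$) and interchanging the order of summation, the coefficient of $f_{wz}$ becomes $\sum_{x<y\le w}\overline{\kappa}_{xy}\kappa_{yw}=(\overline{\kappa}\kappa)_{xw}-\kappa_{xw}=\delta_{xw}-\kappa_{xw}=-\kappa_{xw}$ for $w>x$; hence $t^{n}B(t^{-1})=-\sum_{x<w\le z}\kappa_{xw}f_{wz}=-B(t)$. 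I would expect the bookkeeping in this reindexing, together with the (routine) check that the induction is well-founded — local finiteness guarantees $[x,z]$ has finitely many subintervals, each proper one of strictly smaller rank — to be the only genuinely delicate parts; everything else is formal.
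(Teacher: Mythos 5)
Your proof is correct: the componentwise induction on $r_{xz}$, with uniqueness forced by the degree bound $\deg f_{xz}<\tfrac12 r_{xz}$ versus the lowest exponent $r_{xz}-\deg f_{xz}$, and existence secured by the antisymmetry $t^{n}B(t^{-1})=-B(t)$ (which is exactly where $\overline{\kappa}=\kappa^{-1}$ enters), is the standard argument. The paper itself offers no proof of this theorem — it is quoted from Stanley, Dyer and Brenti (see also Theorem 2.2 of Proudfoot's survey) — and your argument is essentially the one given in those sources, including the two-sided use of $\overline{\kappa}\kappa=\delta$ and $\kappa\overline{\kappa}=\delta$ for the $f$- and $g$-cases respectively.
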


We will refer to $f$ as the right Kazhdan-Lusztig-Stanley function associated with $\kappa$, and  $g$ as the left Kazhdan-Lusztig-Stanley function associated with $\kappa$. For any $x\leq z\in P$, we will refer to the polynomial $f_{xz}(t)$ as a right Kazhdan-Lusztig-Stanley polynomial, and the polynomial $g_{xz}(t)$ as a left Kazhdan-Lusztig-Stanley polynomial.
Since $f,g\in \mathscr{I}_{1/2}(P)$, we see that $f_{xx}(t)=g_{xx}(t)=1$ for all $x \in P$ and hence both $f$ and $g$ are invertible.
It is obvious that $f,g\in \mathscr{I}_{1/2}(P)$, see \cite[Section 4]{brenti19988ASPM}. From \eqref{eq-right-left-kls} it immediately follows that
\begin{align}\label{Q_relation_bar}
\overline{f^{-1}}=f^{-1}\overline{\kappa}
\mathrm{~~~~and~~~}
\overline{g^{-1}}=\overline{\kappa}g^{-1}.
\end{align}
This means that $f^{-1}$ is the left Kazhdan-Lusztig-Stanley function associated with $P$-kernel $\overline{\kappa}$, and  $g^{-1}$ is the corresponding right Kazhdan-Lusztig-Stanley function.

 \subsection{The inverse Kazhdan-Lusztig polynomial of a matroid}

 This subsection aims to apply the theory of Kazhdan-Lusztig-Stanley polynomials to
 prove Theorem \ref{inverseKL-prop} and Theorem \ref{KL-inverse-KL-2}. To this end, we take $P$ in the above subsection to be
 the lattice $L(M)$ of flats of a matroid $M$ with ground set $E$.

 It is not difficult to see that $L(M)$ is a weakly ranked poset if it is equipped with a function $r \in I(L(M))$,
 which assigns $\mathrm{rk}\, M_G-\mathrm{rk}\, M_F$ to each interval $[F, G]$ for any $F\leq G \in L(M)$. Define $\zeta \in  {I}(L(M))$ to be the function given by $\zeta_{F,G}(t)= 1$ for all $F \leq G\in L(M)$. Clearly, $\zeta \in  \mathscr{I}_{1/2}(L(M))$, which is invertible. The element $$\mu:=\zeta^{-1} \in  \mathscr{I}_{1/2}(L(M))$$
 coincides with the M$\mathrm{\ddot{o}}$bius function of $L(M)$.
Since both $\zeta$ and $\mu$ belong to $\mathscr{I}(L(M))$, using the involution on $\mathscr{I}(L(M))$ one can define
the product
$$\chi:=\mu \overline{\zeta} =\zeta^{-1}\overline{\zeta},$$
which is called the characteristic function of $L(M)$. It is known that $\chi_{\emptyset, E}(t)$ coincides with the characteristic polynomial of $\chi_{M}(t)$. Note that
$$\chi^{-1}=(\overline{\zeta})^{-1}\zeta=\overline{\zeta^{-1}}\zeta=\overline{\chi}.$$
Thus $\chi$ is a $L(M)$-kernel. By \eqref{eq-right-left-kls} and \eqref{Q_relation_bar} there exists a unique pair of functions $f,g \in \mathscr{I}_{1/2}(L(M))$ such that
\begin{align}\label{eq-key-identity_2}
\overline{f^{-1}}=f^{-1}\overline{\chi}
\mathrm{~~~~and~~~}
\overline{g^{-1}}=\overline{\chi}g^{-1}.
\end{align}

\begin{rem}
  
For any pairs of flats $F \leq G \in L(M)$,
let $M^*$ denote the matroid with
 $L(M^*)\cong [F,G]$ and  ground set $E^*$. As discussed before, one can define $L(M^*)$-kernel $\chi^*$. For $\chi^*$ there exists a unique pair of functions
 $f^*,g^* \in \mathscr{I}_{1/2}(L(M^*))$ such that
\begin{align}\label{eq-key-identity}
\overline{(f^*)^{-1}}=(f^*)^{-1}\overline{\chi^*}
\mathrm{~~~~and~~~}
\overline{(g^*)^{-1}}=\overline{\chi^*}(g^*)^{-1}.
\end{align}
Due to the invariant property of the characteristic function for matroids, we have
$$(\chi^{-1})_{F, G}(t)=((\chi^*)^{-1})_{\emptyset, E^*}(t),$$
and therefore
\begin{align}\label{eq-invariant-identity}
(f^{-1})_{F, G}(t)=((f^*)^{-1})_{\emptyset, E^*}(t)
\end{align}
since the left and right Kazhdan-Lusztig-Stanley functions are uniquely determined by the kernel for the lattice of flats. 
\end{rem}

We are now in the position to prove Theorem \ref{inverseKL-prop}.

\proof [Proof of Theorem \ref{inverseKL-prop}]
To prove the existence, for any matroid $M$ let
$$\Hat{Q}_M(t):=(f^{-1})_{\emptyset, E}(t)    \mathrm{~~~~and~~~}     Q_M(t):=(-1)^{\mathrm{rk} \, M}\Hat{Q}_M(t).$$
We proceed to show that $Q_M(t)$ satisfies the conditions ($i^{\prime}$), ($ii^{\prime}$) and ($iii^{\prime}$) in Theorem \ref{inverseKL-prop}.
If $\mathrm{rk}~M=0$, whence $\emptyset=E$, then $$Q_M(t)=(-1)^{\mathrm{rk}~M}(f^{-1})_{\emptyset,E}(t)=(f^{-1})_{\emptyset,\emptyset}(t)=1$$
since $f^{-1}\in \mathscr{I}_{1/2}( L(M))$, and hence ($i^{\prime}$) holds. 
If $\mathrm{rk} \, M> 0$, whence $\emptyset <E$, then
$$\mathrm{deg}~Q_M(t)=\mathrm{deg}~(f^{-1})_{\emptyset, E}(t)<\frac{1}{2} \, r_{\emptyset, E}=\frac{1}{2}~\mathrm{rk} \, M$$
again by $f^{-1}\in \mathscr{I}_{1/2}( L(M))$, and therefore condition ($ii^{\prime}$) holds. 
By \eqref{eq-key-identity_2}, we have
\begin{align}\label{eq-leftright-keyid} 
\overline{f^{-1}}_{\emptyset, E}(t)=(f^{-1}\overline{\chi})_{\emptyset, E}(t).
\end{align}
For the left hand side we find that
$$(\overline{f^{-1}})_{\emptyset, E}(t)=t^{r_{\emptyset,E}}(f^{-1})_{\emptyset, E}(t^{-1})
=t^{\mathrm{rk} \, M} \Hat{Q}_M(t^{-1})=t^{\mathrm{rk} \, M} \cdot  (-1)^{\mathrm{rk} \, M} Q_M(t^{-1}).$$
By \eqref{eq-conv-product} and \eqref{eq-invariant-identity}, the right hand side of \eqref{eq-leftright-keyid} becomes
\begin{align*}
(f^{-1}\overline{\chi})_{\emptyset, E}(t)
=&\sum_{F\in L(M)}
   (f^{-1})_{\emptyset, F}(t)  \cdot   t^{r_{F, E}}\chi_{F, E}(t^{-1})= \sum_{x\in P}
 \Hat{Q}_{M_F}(t)     \cdot t ^{\mathrm{rk} \, M^F}  \chi_{M^F}(t^{-1})\\[5pt]
 =&\sum_{F\in L(M)}
 (-1)^{\mathrm{rk} \, M_{F}} Q_{M_F}(t)     \cdot t ^{\mathrm{rk} \, M^F}  \chi_{M^F}(t^{-1}).
\end{align*}
Combining the above two identities, we establish the validity of ($iii^{\prime}$).

It remains to show the uniqueness of $Q_M(t)$. To this end, we rewrite \eqref{inverse_KL_matroid-2} as
\begin{align*}
t^{\mathrm{rk} \, M} Q_M(t^{-1})-  Q_{M}(t)
=\sum_{E \neq F\in L(M)}
 (-1)^{\mathrm{rk} \, M_{F}} Q_{M_F}(t)     \cdot (-t) ^{\mathrm{rk} \, M^F}  \chi_{M^F}(t^{-1}).
\end{align*}
By induction on the rank of $M$, we may assume that the right hand side of the above equation has been fixed. 
It is clear that there can be at most one polynomial $Q_M(t)$ of degree strictly less than $\frac{1}{2}~ \mathrm{rk} \, M$ satisfying this condition, as desired.
\qed

%

\begin{rem}
The above proof shows that \eqref{inverse_KL_matroid-2} is equivalent to \eqref{eq-key-identity_2}.
Taking the involution on both sides of \eqref{eq-key-identity_2}, we obtain 
\begin{align*}
f^{-1}=\overline{f^{-1}}\chi,
\end{align*}
from which we get that
\begin{align*}
(f^{-1})_{\emptyset, E }(t)
=\sum_{F\in L(M)}
   t^{r_{\emptyset, F }}(f^{-1})_{\emptyset, F}(t^{-1}) \cdot   \chi_{F, E}(t).
\end{align*}
It follows that
\begin{align}\label{inv_KL_equa_2}
\Hat{Q}_M(t)= \sum_{F\in L(M)}
 t^{\mathrm{rk} \, M_{F}} \Hat{Q}_{M_F}(t^{-1}) \cdot  \chi_{M^F}(t),
\end{align}
which is equivalent to
\begin{align}\label{inv_KL_equa_2_2}
Q_M(t)=(-1)^{\mathrm{rk} \, M} \sum_{F\in L(M)}
 (-t)^{\mathrm{rk} \, M_{F}} Q_{M_F}(t^{-1}) \cdot     \chi_{M^F}(t)
\end{align}
For the computational convenience of the inverse Kazhdan-Lusztig polynomials for uniform matroids, we will use \eqref{inv_KL_equa_2} rather than \eqref{inv_KL_equa_2_2}.
\end{rem}

The reason that we call $Q_M(t)$ the inverse Kazhdan-Lusztig polynomial of $M$ is now clear. From the above proof we see that
$Q_M(t):=(-1)^{\mathrm{rk} \, M}(f^{-1})_{\emptyset, E}(t)$. While for the Kazhdan-Lusztig polynomial of $M$ we have $P_{M}(t)=f_{\emptyset, E}(t)$, see \cite{proudfoot2017arxiv_alge-geo}. We proceed to prove Theorem \ref{KL-inverse-KL-2}. 

\proof[Proof of Theorem \ref{KL-inverse-KL-2}]
Note that $E\neq \emptyset$ and $ff^{-1}=\delta$.
Thus $(ff^{-1})_{\emptyset, E}=0$, that is, 
$$\sum_{F \in L(M)}f_{\emptyset, F}(t) \cdot (f^{-1})_{F, E}(t)=0,$$
which, in terms of $P_M(t)$ and $Q_M(t)$, can be rewritten as
\begin{align}\label{relation_KL-IKL}
\sum_{F\in L(M)}P_{M_F}(t) \cdot  \Hat{Q}_{M^F}(t)=0.
\end{align}
When $F=E$, we have $P_{M_F}(t)=P_{M}(t)$ and $\Hat{Q}_{M^F}(t)=1$, from which it follows that
$$P_{M}(t)=-\sum_{E \neq F\in L(M)}P_{M_F}(t) \cdot  \Hat{Q}_{M^F}(t)
=-\sum_{E \neq F\in L(M)}P_{M_F}(t) \cdot (-1)^{\mathrm{rk} \, M^{F}}Q_{M^F}(t),$$
that is \eqref{from_Q_to_P}. By taking $F=\emptyset$ in \eqref{relation_KL-IKL}, we will obtain \eqref{from_P_to_Q}.  
The other two identities can be proved in the same manner by using the relation $f^{-1}f=\delta$. 
\qed

Recall that the constant term of the Kazhdan-Lusztig polynomial $P_M(t)$ of a matroid is always $1$. For the inverse Kazhdan-Lusztig polynomial $Q_M(t)$, we have the following result. 

\begin{prop}
For any matroid $M$, the constant term of $Q_M(t)$ is equal to the constant term of $\chi_M(t)$.
\end{prop}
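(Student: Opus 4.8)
The plan is to extract the constant term from the defining recursion \eqref{inv_KL_equa_2}, which expresses $\Hat{Q}_M(t)$ in terms of the $\Hat{Q}_{M_F}(t)$ and the characteristic polynomials $\chi_{M^F}(t)$. First I would dispose of the rank-zero case: if $\mathrm{rk}\,M=0$ then $Q_M(t)=1$ and $\chi_M(t)=1$, so both constant terms equal $1$. Assume now $\mathrm{rk}\,M>0$ and proceed by induction on the rank. Write $q_M:=Q_M(0)$ for the constant term of $Q_M$ and $c_M:=\chi_M(0)$ for that of $\chi_M$; the goal is $q_M=c_M$.

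The key observation is about the interplay of degrees in \eqref{inv_KL_equa_2}. Setting $\Hat{Q}_M(t)=\sum_{F\in L(M)} t^{\mathrm{rk}\,M_F}\,\Hat{Q}_{M_F}(t^{-1})\cdot\chi_{M^F}(t)$, I would look at the coefficient of $t^0$ on both sides. On the right, a term indexed by a flat $F$ contributes to the constant term only through the lowest-degree part. Since $\deg \Hat{Q}_{M_F}(t)<\tfrac12\,\mathrm{rk}\,M_F$ (by condition $(ii')$, when $M_F$ has positive rank), the polynomial $t^{\mathrm{rk}\,M_F}\,\Hat{Q}_{M_F}(t^{-1})$ has lowest-degree term of degree strictly greater than $\tfrac12\,\mathrm{rk}\,M_F>0$, hence contributes nothing to the constant term unless $\mathrm{rk}\,M_F=0$, i.e.\ unless $F=\emptyset$; and when $F=\emptyset$ this factor is just $\Hat{Q}_{M_\emptyset}(t)=1$. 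Thus on the right only two kinds of contribution survive at $t^0$: the $F=\emptyset$ term contributes $\chi_{M^\emptyset}(t)=\chi_M(t)$, whose constant term is $c_M$; and the $F=E$ term contributes $t^{\mathrm{rk}\,M}\Hat{Q}_M(t^{-1})\cdot\chi_{M^E}(t)=t^{\mathrm{rk}\,M}\Hat{Q}_M(t^{-1})$ (since $\chi_{M^E}(t)=1$), whose constant term is the leading coefficient of $\Hat{Q}_M(t)$ — but $\deg\Hat{Q}_M<\tfrac12\,\mathrm{rk}\,M\le \mathrm{rk}\,M$ so $t^{\mathrm{rk}\,M}\Hat{Q}_M(t^{-1})$ has lowest degree $>0$ and contributes $0$. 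All intermediate flats $\emptyset<F<E$ contribute $0$ by the degree count just given. Extracting constant terms therefore yields $\Hat{Q}_M(0)=c_M$, and since $Q_M(t)=(-1)^{\mathrm{rk}\,M}\Hat{Q}_M(t)$ we would need to reconcile the sign — so in fact it is cleaner to run the same constant-term extraction directly on \eqref{inv_KL_equa_2_2}, or equivalently to note $\chi_{M}(t)=\chi_{\emptyset,E}(t)$ carries its own sign conventions; I would double check that $(-1)^{\mathrm{rk}\,M}$ matching works out, using $\chi_M(t)=\mu\overline\zeta$ to track signs.

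The subtle point — and the place where I expect to have to be careful rather than the place that is genuinely hard — is precisely this bookkeeping of signs between $\Hat{Q}$ and $Q$, and between the characteristic \emph{function} $\chi_{F,G}(t)$ and the characteristic \emph{polynomial} $\chi_M(t)$, since the constant term of $\chi_M(t)$ is $(-1)^{\mathrm{rk}\,M}\mu_{\emptyset,E}$ up to conventions. The honest way to handle this is to observe that $\chi_{M^E}(t)=1$ has constant term $1$, that $\Hat{Q}_{M_\emptyset}(t)=1$, and then simply evaluate the recursion \eqref{inv_KL_equa_2} at a formal level, isolating the $t^0$ coefficient; the degree bounds from $\mathscr{I}_{1/2}$ kill every term except the single $\chi_{M}(t)$ coming from $F=\emptyset$, forcing $\Hat{Q}_M(0)=\chi_M(0)\cdot(-1)^{?}$, and then folding in $Q_M=(-1)^{\mathrm{rk}\,M}\Hat{Q}_M$ gives the stated equality. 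There is no real obstacle here beyond careful index-chasing: the mechanism is entirely the ``half-degree'' constraint $(ii')$, which is the same mechanism that pins down the constant term of $P_M$ to be $1$.
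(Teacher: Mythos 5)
Your argument is essentially the paper's own proof: extract the coefficient of $t^0$ from \eqref{inv_KL_equa_2}, note that for every flat $F$ with $\mathrm{rk}\,M_F\geq 1$ the factor $t^{\mathrm{rk}\,M_F}\Hat{Q}_{M_F}(t^{-1})$ has lowest degree strictly greater than $\tfrac12\mathrm{rk}\,M_F>0$ while $\chi_{M^F}(t)\in\mathbb{Z}[t]$, so only $F=\emptyset$ contributes, yielding $[t^0]\Hat{Q}_M(t)=[t^0]\chi_M(t)$. The induction you set up is unnecessary; the half-degree bound does all the work, exactly as in the paper, and your separate treatment of $F=E$ is subsumed by the same degree count.

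The sign issue you flag and defer is not something you could have reconciled, because it is a genuine discrepancy in the statement itself: the argument (yours and the paper's alike) proves $\Hat{Q}_M(0)=\chi_M(0)$, i.e.\ $Q_M(0)=(-1)^{\mathrm{rk}\,M}\chi_M(0)$, and the sign cannot be dropped. For instance $Q_{B_1}(t)=1$ while $\chi_{B_1}(t)=t-1$ has constant term $-1$; likewise $Q_{U_{m,d}}(0)=\binom{m+d-1}{d-1}$ while $\chi_{U_{m,d}}(0)=(-1)^d\binom{m+d-1}{d-1}$. The paper's own proof stops at precisely the identity $[t^0]\Hat{Q}_M(t)=[t^0]\chi_M(t)$ that you reached, so the proposition should be read as a statement about $\Hat{Q}_M$ (equivalently, the constant term of $Q_M$ equals $(-1)^{\mathrm{rk}\,M}\chi_M(0)$, the unsigned M\"obius number of $L(M)$); up to that caveat, your proof is as complete as the paper's.
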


\proof We use $[t^0]H(t)$ denote the constant term of a polynomial $H(t)$. 
Taking the constant term of both sides of \eqref{inv_KL_equa_2}, we obtain
$$[t^0]\Hat{Q}_M(t)=[t^0]\sum_{F\in L(M)}
 t ^{\mathrm{rk} \, M_F} \Hat{Q}_{M_F}(t^{-1}) \cdot     \chi_{M^F}(t) .$$
According to the definition of   inverse Kazhdan-Lusztig polynomial, if $\mathrm{rk} \, M_F\geq 1$,  then
 $\mathrm{deg} \, \Hat{Q}_{M_F}(t)< \frac{1}{2}   \mathrm{rk} \, M_F$.
 A little thought shows that
 the degree of the lowest term in $ t ^{\mathrm{rk} \, M_F} \Hat{Q}_{M_F}(t^{-1})$  is strictly greater than $\frac{1}{2}   \mathrm{rk} \, M_F$.
Recall that
 $\chi_{M^F}(t) \in \mathbb{Z}[t]$.
We find that the constant term in $t ^{\mathrm{rk} \, F} \Hat{Q}_{M_F}(t^{-1}) \cdot     \chi_{M^F}(t) $ is $0$ if $\mathrm{rk} \, M_F\geq 1$. 
  If $\mathrm{rk} \, M_F=0$, it is clearly that
 $F=\emptyset$. Thus,
 $t ^{\mathrm{rk} \, F} \Hat{Q}_{M_F}(t^{-1}) \cdot     \chi_{M^F}(t) = \chi_{M}(t) .$
 Therefore, we have
 $$[t^0]\Hat{Q}_M(t)=[t^0]\chi_M(t).$$ This completes the proof. 
 \qed
 
\section{Special matriods}\label{sec-3}

The aim of this section is to compute the inverse Kazhdan-Lusztig polynomials for boolean matroids and uniform matroids.
In this section we will also show how to compute the Kazhdan-Lusztig polynomial for uniform matroids by using the inverse Kazhdan-Lusztig polynomials.

\subsection{Boolean matroids}

In order to compute the inverse Kazhdan-Lusztig polynomials for boolean matroids, let us first prove that the inverse Kazhdan-Lusztig polynomial for a matroid is multiplicative on direct sums. It is well known that the characteristic polynomial is  multiplicative on direct sums, namely,
\begin{align}\label{chr-mul}
\chi_{M_1\oplus M_2}(t)=\chi_{M_1}(t)\chi_{ M_2}(t),
\end{align}
see \cite[p.121]{White_Neil}.
Recall that Elias, Proudfoot and Wakefield already showed that the  Kazhdan-Lusztig polynomial is also multiplicative on direct sums, see \cite[Proposition 2.7]{ElisKL2016adv}.

\begin{lem}\label{sum}
For any matroids $M_1$ and $M_2$ with respective ground set $E_1$ and $E_2$,  we have
$$Q_{M_1\oplus M_2}(t)=Q_{M_1}(t)Q_{ M_2}(t).$$
\end{lem}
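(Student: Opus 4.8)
The plan is to exploit the fact that the lattice of flats of a direct sum factors as a product of posets, $L(M_1\oplus M_2)\cong L(M_1)\times L(M_2)$, and then to track how the Kazhdan-Lusztig-Stanley machinery behaves under this product. Concretely, a flat $F\in L(M_1\oplus M_2)$ corresponds to a pair $(F_1,F_2)$ with $F_i\in L(M_i)$, the rank is additive, $\mathrm{rk}\,(M_1\oplus M_2)_F=\mathrm{rk}\,(M_1)_{F_1}+\mathrm{rk}\,(M_2)_{F_2}$, and localization/contraction respect the decomposition: $(M_1\oplus M_2)_F\cong (M_1)_{F_1}\oplus(M_2)_{F_2}$ and $(M_1\oplus M_2)^F\cong (M_1)^{F_1}\oplus(M_2)^{F_2}$. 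First I would record these structural facts (they are standard matroid facts, cf.\ \cite{Oxley2011Oxfordmatroid}), together with the multiplicativity \eqref{chr-mul} of the characteristic polynomial.

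Next I would argue by induction on $\mathrm{rk}\,(M_1\oplus M_2)$ using the defining recursion \eqref{inv_KL_equa_2} (or equivalently \eqref{inverse_KL_matroid-2}). Write $\Hat{Q}_M(t)=(-1)^{\mathrm{rk}\,M}Q_M(t)$, so it suffices to show $\Hat{Q}_{M_1\oplus M_2}(t)=\Hat{Q}_{M_1}(t)\Hat{Q}_{M_2}(t)$. The base case $\mathrm{rk}=0$ is immediate from $(i')$. For the inductive step, plug the candidate product into the right-hand side of \eqref{inv_KL_equa_2}: summing over $F=(F_1,F_2)$ and using rank-additivity plus \eqref{chr-mul} plus the inductive hypothesis on each proper flat, the sum factors as
\begin{align*}
\sum_{F}t^{\mathrm{rk}\,(M_1\oplus M_2)_F}\Hat{Q}_{(M_1\oplus M_2)_F}(t^{-1})\chi_{(M_1\oplus M_2)^F}(t)
=\left(\sum_{F_1}t^{\mathrm{rk}\,(M_1)_{F_1}}\Hat{Q}_{(M_1)_{F_1}}(t^{-1})\chi_{(M_1)^{F_1}}(t)\right)\!\left(\sum_{F_2}\cdots\right),
\end{align*}
and each factor is $\Hat{Q}_{M_i}(t)$ again by \eqref{inv_KL_equa_2}. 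This shows the product $\Hat{Q}_{M_1}(t)\Hat{Q}_{M_2}(t)$ satisfies the recursion $(iii')$. One then has to check the degree constraint $(ii')$: since $\deg \Hat{Q}_{M_i}(t)<\tfrac12\,\mathrm{rk}\,M_i$ whenever $\mathrm{rk}\,M_i>0$, the product has degree $<\tfrac12(\mathrm{rk}\,M_1+\mathrm{rk}\,M_2)=\tfrac12\,\mathrm{rk}\,(M_1\oplus M_2)$, with the edge cases $\mathrm{rk}\,M_i=0$ handled directly (then $M_i$ contributes $\Hat{Q}_{M_i}=1$). By the uniqueness asserted in Theorem \ref{inverseKL-prop}, $Q_{M_1\oplus M_2}(t)$ must equal this product up to the sign $(-1)^{\mathrm{rk}\,(M_1\oplus M_2)}=(-1)^{\mathrm{rk}\,M_1}(-1)^{\mathrm{rk}\,M_2}$, giving the claim.

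Alternatively — and perhaps more cleanly — I would work directly in the incidence algebra: $I(L(M_1)\times L(M_2))\cong I(L(M_1))\otimes I(L(M_2))$ as rings (the convolution product respects the tensor decomposition because intervals in the product poset are products of intervals), the involution $\bar{\phantom{f}}$ is the tensor of the two involutions because the weak rank function is additive, and the characteristic function satisfies $\chi^{M_1\oplus M_2}=\chi^{M_1}\otimes\chi^{M_2}$. Hence the unique solution $f$ of $\overline{f^{-1}}=f^{-1}\overline{\chi}$ for the product is the tensor product of the two factor solutions, so $(f^{-1})_{\emptyset,E_1\times E_2}=(f^{-1})_{\emptyset,E_1}\cdot(f^{-1})_{\emptyset,E_2}$, i.e.\ $\Hat{Q}_{M_1\oplus M_2}=\Hat{Q}_{M_1}\Hat{Q}_{M_2}$, and the sign bookkeeping finishes it. The main obstacle in either route is not conceptual but a matter of being careful: verifying that localization and contraction genuinely commute with direct sum at the level of the lattice of flats (including the identification of the relevant sub- and quotient-matroids), and making sure the degree bound $(ii')$ is not violated in the boundary cases where one summand has rank $0$. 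Once the poset-product structure is set up correctly, everything else is a short formal computation.
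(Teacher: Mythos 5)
Your primary argument is correct and follows essentially the same route as the paper's proof: induction on $\mathrm{rk}\,M_1+\mathrm{rk}\,M_2$, the identification $L(M_1\oplus M_2)=L(M_1)\times L(M_2)$ with rank additivity and compatibility of localization/contraction, multiplicativity \eqref{chr-mul} of the characteristic polynomial, the recursion \eqref{inv_KL_equa_2}, and a final degree-bound/uniqueness argument (the paper phrases this as subtracting the two expansions and noting the difference would have degree below $\tfrac12\,\mathrm{rk}(M_1\oplus M_2)$ while its mirrored form has all terms above it, which is exactly the mechanism behind the uniqueness you invoke). Your alternative tensor-decomposition of the incidence algebra is a valid, cleaner structural variant, but it is not needed and the main line of your proposal matches the paper.
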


\proof One can prove the assertion in the same manner as Elias, Proudfoot and Wakefield did for the matroid Kazhdan-Lusztig polynomials. Recalling $Q_M(t)=(-1)^{\mathrm{rk} \, M}\Hat{Q}_M(t)$, it suffices to show that
$$\Hat{Q}_{M_1\oplus M_2}(t)=\Hat{Q}_{M_1}(t)\Hat{Q}_{ M_2}(t).$$
We use induction on the sum $\mathrm{rk}\, M_1+\mathrm{rk}\, M_2$. The statement is clear when $\mathrm{rk}\, M_1=0$ or $\mathrm{rk}\, M_2=0$. Now assume that the statement holds for $M_1'$ and $M_2'$ whenever one of $\mathrm{rk}\, M_1'\leq  \mathrm{rk}\, M_1$ and $\mathrm{rk}\, M_2'\leq  \mathrm{rk}\, M_2$ is strict.

By the properties of flats we have
$$L(M_1\oplus M_2)=L(M_1)\times L(M_2),$$
and
$$\mathrm{rk} (M_1\oplus M_2)_{(F_1,F_2)}=\mathrm{rk}\, (M_1)_{F_1}+\mathrm{rk}\, (M_2)_{F_2}$$
for any $(F_1,F_2)\in L(M_1\oplus M_2)$. Moreover,
\begin{align*}
  (M_1\oplus M_2)_{(F_1,F_2)} & = (M_1)_{F_1} \oplus (M_2)_{F_2},\\[6pt]
  (M_1\oplus M_2)^{(F_1,F_2)} & = (M_1)^{F_1} \oplus (M_2)^{F_2}.
\end{align*}
Thus, from \eqref{inv_KL_equa_2} and \eqref{chr-mul} it follows that
\begin{align*}
	\Hat{Q}_{M_1\oplus M_2}(t)&=\sum_{(F_1,F_2)\in L(M_1\oplus M_2)}t^{\mathrm{rk}\, (M_1\oplus M_2)_{(F_1,F_2)}}\Hat{Q}_{(M_1\oplus M_2)_{(F_1,F_2)}}(t^{-1}) \chi_{(M_1\oplus M_2)^{(F_1,F_2)}}(t)\\[6pt]
	&=\sum_{(F_1,F_2)\in L(M_1\oplus M_2)}t^{\mathrm{rk}\, (M_1)_{F_1}+\mathrm{rk}\, (M_2)_{F_2}}\Hat{Q}_{(M_1)_{F_1} \oplus (M_2)_{F_2}}(t^{-1}) \chi_{(M_1)^{F_1} \oplus (M_2)^{F_2}}(t).
\end{align*}
By induction, we get that
\begin{align}
&\Hat{Q}_{M_1\oplus M_2}(t)=t^{\mathrm{rk}\, (M_1)_{E_1}+\mathrm{rk}\, (M_2)_{E_2}}\Hat{Q}_{(M_1)_{E_1} \oplus (M_2)_{E_2}}(t^{-1}) \chi_{(M_1)^{E_1} \oplus (M_2)^{E_2}}(t)\nonumber\\[5pt]
&\quad+\sum_{(F_1,F_2)\neq (E_1, E_2)}t^{\mathrm{rk}\, (M_1)_{F_1}+\mathrm{rk}\, (M_2)_{F_2}}\Hat{Q}_{(M_1)_{F_1} \oplus (M_2)_{F_2}}(t^{-1}) \chi_{(M_1)^{F_1} \oplus (M_2)^{F_2}}(t)\nonumber\\[10pt]
=&t^{\mathrm{rk}\, M_1+\mathrm{rk}\, M_2}\Hat{Q}_{M_1 \oplus M_2}(t^{-1})\nonumber\\[5pt]
&\quad+\sum_{(F_1,F_2)\neq (E_1, E_2)}\left(t^{\mathrm{rk}\, (M_1)_{F_1}}Q^{*}_{(M_1)_{F_1}}(t^{-1})\chi_{(M_1)^{F_1}}(t)\right)\cdot \left(t^{\mathrm{rk}\, (M_2)_{F_2}}Q^{*}_{(M_2)_{F_2}}(t^{-1})\chi_{(M_2)^{F_2}}(t)\right).\label{eq-before}
\end{align}
By \eqref{inv_KL_equa_2} we have
\begin{align*}
\Hat{Q}_{M_1}(t)&= \sum_{F_1\in L(M_1)}
 t^{\mathrm{rk} \, (M_1)_{F_1}} \Hat{Q}_{(M_1)_{F_1}}(t^{-1}) \cdot     \chi_{(M_1)^{F_1}}(t),\\
\Hat{Q}_{M_2}(t)&= \sum_{F_2\in L(M_2)}
 t^{\mathrm{rk} \, (M_2)_{F_2}} \Hat{Q}_{(M_2)_{F_2}}(t^{-1}) \cdot     \chi_{(M_2)^{F_2}}(t),
\end{align*}
which leads to
\begin{align}
&\Hat{Q}_{M_1}(t)\Hat{Q}_{M_2}(t)
=t^{\mathrm{rk}\, M_1}\Hat{Q}_{M_1}(t^{-1})\cdot t^{\mathrm{rk}\, M_2}\Hat{Q}_{M_2}(t^{-1})\nonumber\\[5pt]
&\quad+\sum_{(F_1,F_2)\neq (E_1, E_2)}\left(t^{\mathrm{rk}\, (M_1)_{F_1}}Q^{*}_{(M_1)_{F_1}}(t^{-1})\chi_{(M_1)^{F_1}}(t)\right)\cdot \left(t^{\mathrm{rk}\, (M_2)_{F_2}}Q^{*}_{(M_2)_{F_2}}(t^{-1})\chi_{(M_2)^{F_2}}(t)\right).\label{eq-after}
\end{align}
Substracting \eqref{eq-after} from \eqref{eq-before} yields
\begin{align*}
\Hat{Q}_{M_1\oplus M_2}(t)-\Hat{Q}_{M_1}(t)\Hat{Q}_{M_2}(t)=	
t^{\mathrm{rk}\, M_1+\mathrm{rk}\, M_2}\Hat{Q}_{M_1\oplus M_2}(t^{-1}) -t^{\mathrm{rk}\, M_1}\Hat{Q}_{M_1}(t^{-1})\cdot t^{\mathrm{rk}\, M_2}\Hat{Q}_{M_2}(t^{-1}).
\end{align*}
We need to show that $$\Hat{Q}_{M_1\oplus M_2}(t)-\Hat{Q}_{M_1}(t)\Hat{Q}_{M_2}(t)=0.$$
Suppose the contrary. Then, as a polynomial in $t$, the degree of the left-hand side is strictly less than $\frac{1}{2}(\mathrm{rk}\, M_1+\mathrm{rk}\, M_2)$, while the lowest degree of the right-hand side is strictly greater than $\frac{1}{2}(\mathrm{rk}\, M_1+\mathrm{rk}\, M_2)$, a contradiction. This completes the proof.
\qed

We proceed to compute the inverse Kazhdan-Lusztig polynomial for boolean matroids. Given $n\geq 1$, let $B_n$ denote the boolean matroid of rank $n$. The following result is immediate.

\begin{cor}\label{Bollean_IKL}
	For any boolean matroid $B_n$ of positive rank, we have  $$Q_{B_n}(t)=1.$$
\end{cor}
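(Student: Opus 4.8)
The plan is to reduce to the rank-one case via Lemma \ref{sum}. The boolean matroid $B_n$ is the direct sum of $n$ copies of $B_1$, the rank-one boolean matroid (a single point). By the multiplicativity established in Lemma \ref{sum}, $Q_{B_n}(t) = Q_{B_1}(t)^n$, so it suffices to show $Q_{B_1}(t) = 1$.

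To compute $Q_{B_1}(t)$, I would apply the degree condition $(ii')$ of Theorem \ref{inverseKL-prop} directly: since $\mathrm{rk}\, B_1 = 1 > 0$, we need $\deg Q_{B_1}(t) < \tfrac12$, which forces $Q_{B_1}(t)$ to be a constant. That constant is pinned down either by condition $(i')$-style reasoning applied through the defining recursion, or more cleanly by the preceding Proposition: the constant term of $Q_M(t)$ equals the constant term of $\chi_M(t)$, and since $Q_{B_1}(t)$ is itself constant, $Q_{B_1}(t) = [t^0]\chi_{B_1}(t)$. The characteristic polynomial of $B_1$ is $\chi_{B_1}(t) = t - 1$, whose constant term is $-1$. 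Hmm — so this gives $Q_{B_1}(t) = -1$, hence $Q_{B_n}(t) = (-1)^n$, which would contradict the claimed answer $1$. The resolution is that one should track $\hat Q$ rather than $Q$: from \eqref{inv_KL_equa_2}, $\hat Q_{B_1}(t) = \sum_{F \in L(B_1)} t^{\mathrm{rk}\,(B_1)_F}\hat Q_{(B_1)_F}(t^{-1})\chi_{(B_1)^F}(t)$, and the degree bound on $\hat Q_{B_1}$ forces it constant, after which comparing against $\hat Q_{B_1}(t) = t\,\hat Q_{B_1}(t^{-1}) + (t-1)$ (reading off the $F = \emptyset$ and $F = E$ terms, using $\hat Q_{B_0} = 1$) gives, for a constant $c = \hat Q_{B_1}$, the equation $c = ct + (t-1)$, which has no constant solution unless we are more careful — so in fact the cleanest route is: $\hat Q_{B_1}$ has degree $< \tfrac12$ so is a constant $c$; matching constant terms in $c = t c + (t-1)$ gives $c = -1$; thus $\hat Q_{B_n}(t) = (-1)^n$ and $Q_{B_n}(t) = (-1)^n \hat Q_{B_n}(t) = (-1)^n(-1)^n = 1$.

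So the key steps, in order, are: (1) invoke Lemma \ref{sum} to write $Q_{B_n} = Q_{B_1}^n$ and $\hat Q_{B_n} = \hat Q_{B_1}^n$; (2) use the degree bound $(ii')$ to conclude $\hat Q_{B_1}$ is a constant; (3) read off the defining recursion \eqref{inv_KL_equa_2} for $B_1$, using $\chi_{B_1}(t) = t-1$ and $\hat Q_{B_0}(t) = 1$, to solve $\hat Q_{B_1}(t) = -1$; (4) assemble $Q_{B_n}(t) = (-1)^n \hat Q_{B_n}(t) = (-1)^n(-1)^n = 1$. The only subtlety — and the step deserving the most care — is the sign bookkeeping between $Q_M$ and $\hat Q_M = (-1)^{\mathrm{rk}\,M}Q_M$; the multiplicativity is genuinely cleaner on $\hat Q$ (as already exploited in the proof of Lemma \ref{sum}), and the factor $(-1)^{\mathrm{rk}\,M}$ in the definition of $Q_M$ is precisely what makes the two minus signs cancel to yield the clean answer $1$. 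There is no real analytic or combinatorial obstacle here; it is a short computation once the rank-one base case is handled correctly.
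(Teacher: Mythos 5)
Your proposal is correct and takes essentially the same route as the paper: reduce to the rank-one case via the multiplicativity of Lemma \ref{sum} and compute that case from the defining recursion, the only (cosmetic) difference being that the paper computes $Q_{B_1}(t)=1$ directly from \eqref{inverse_KL_matroid-2} while you compute $\hat{Q}_{B_1}(t)=-1$ from the equivalent \eqref{inv_KL_equa_2} and then cancel the signs $(-1)^n(-1)^n=1$. The two hiccups along the way are harmless and self-corrected: the equation $c=ct+(t-1)$ does have the constant solution $c=-1$, and the constant-term proposition should be read as a statement about $\hat{Q}_M$ (its proof gives $[t^0]\hat{Q}_M=[t^0]\chi_M$), which is exactly why tracking $\hat{Q}$ rather than $Q$, as you ultimately do, is the clean bookkeeping.
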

\begin{proof}
From \eqref{inverse_KL_matroid-2} it is routine to compute that
$Q_{B_1}(t)=1.$
Note that $B_n$ is isomorphic to the direct sum of $n$ copies of $B_1$.
The desired result immediately follows from Lemma \ref{sum}.
\end{proof}


\subsection{Uniform matroids}
This section aims to give a general formula for the  inverse Kazhdan-Lusztig polynomial of uniform matroids, based on which we can evaluate directly the corresponding Kazhdan-Lusztig polynomial.

Given two positive integers $m$ and $d$, let $U_{m,d}$ be the uniform matroid of rank $d$ on $m+d$ elements.
We are now in the position to give an explicit formula for the inverse Kazhdan-Lusztig polynomial  of uniform matroids.
\begin{thm}\label{main-thm-uniform}
	For any uniform matroid $U_{m,d}$ with $m,d\geq 1$, we have
	$$Q_{U_{m,d}}(t)=\binom{m+d}{d}\sum_{j=0}^{\lfloor (d-1) /2\rfloor}\frac{m(d-2j)}{(m+j)(m+d-j)}\binom{d}{j}t^j.$$
\end{thm}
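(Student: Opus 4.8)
The plan is to use the recursion \eqref{inv_KL_equa_2} for $\Hat{Q}_M(t)$ specialized to $M=U_{m,d}$, exploiting the well-understood structure of the lattice of flats of a uniform matroid. Recall that a proper flat of $U_{m,d}$ of rank $i$ (with $0\le i\le d-1$) is any $i$-subset of the ground set, so there are $\binom{m+d}{i}$ of them; the localization at such a flat is the boolean matroid $B_i$, and the contraction is the uniform matroid $U_{m,d-i}$. The top flat $E$ itself contributes the term $t^{\mathrm{rk}\,U_{m,d}}\Hat{Q}_{U_{m,d}}(t^{-1})$ with $\chi_{M^E}(t)=\chi_{B_0}(t)=1$. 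Therefore \eqref{inv_KL_equa_2} reads
\begin{align*}
\Hat{Q}_{U_{m,d}}(t) = t^{d}\Hat{Q}_{U_{m,d}}(t^{-1}) + \sum_{i=0}^{d-1}\binom{m+d}{i}\, t^{i}\,\Hat{Q}_{B_i}(t^{-1})\,\chi_{U_{m,d-i}}(t),
\end{align*}
and since $\Hat{Q}_{B_i}(t)=(-1)^i Q_{B_i}(t)=(-1)^i$ by Corollary \ref{Bollean_IKL} (the sign is harmless as $t^i\Hat{Q}_{B_i}(t^{-1})=(-1)^i t^i$), this becomes a concrete functional equation relating $\Hat{Q}_{U_{m,d}}(t)$ and $\Hat{Q}_{U_{m,d}}(t^{-1})$ with an explicitly known inhomogeneous part built from the characteristic polynomials $\chi_{U_{m,d-i}}(t)$.

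First I would write $\chi_{U_{m,k}}(t)$ in closed form. It is classical that $\chi_{U_{m,k}}(t)=\sum_{i=0}^{k-1}(-1)^i\binom{m+k}{i}t^{k-i}+(-1)^k\binom{m+k-1}{k-1}$ (or an equivalent binomial expression); I would pick whichever form makes the subsequent coefficient extraction cleanest. Next, writing the conjectured answer as $\Hat{Q}_{U_{m,d}}(t)=(-1)^d\binom{m+d}{d}\sum_{j\ge 0} c_{d,j}\,t^j$ with $c_{d,j}=\frac{m(d-2j)}{(m+j)(m+d-j)}\binom{d}{j}$, the strategy is to verify the functional equation above degree by degree. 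Because $\deg\Hat{Q}_{U_{m,d}}<d/2$, the term $t^d\Hat{Q}_{U_{m,d}}(t^{-1})$ only contributes to coefficients of $t^\ell$ with $\ell>d/2$, so for $\ell<d/2$ the coefficient of $t^\ell$ in $\Hat{Q}_{U_{m,d}}(t)$ must equal the coefficient of $t^\ell$ in $\sum_{i=0}^{d-1}\binom{m+d}{i}(-1)^i t^i\chi_{U_{m,d-i}}(t)$; for $\ell>d/2$ the equation is the mirror image of this (by the $t\mapsto t^{-1}$ symmetry), and for $\ell=d/2$ (when $d$ is even) one checks the relevant coefficient vanishes, consistent with $c_{d,d/2}=0$. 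Thus the whole theorem reduces to a single binomial-coefficient identity: for each $j$ with $0\le j<d/2$,
\begin{align*}
(-1)^d\binom{m+d}{d}c_{d,j} = \sum_{i=0}^{j}\binom{m+d}{i}(-1)^i\,[t^{j-i}]\,\chi_{U_{m,d-i}}(t).
\end{align*}

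The main obstacle will be establishing this binomial identity. I expect to expand $[t^{j-i}]\chi_{U_{m,d-i}}(t)$ as an explicit binomial coefficient, substitute, pull out the common factor $\binom{m+d}{d}$, and reduce to a hypergeometric-type summation in $i$ that telescopes or is amenable to the Chu–Vandermonde identity (or, failing a slick manual proof, to Zeilberger's algorithm). An alternative, possibly cleaner route — which I would try in parallel — is to avoid coefficient extraction entirely: multiply the functional equation by $t^{-d/2}$ to symmetrize it, recognize $\chi_{U_{m,k}}(t)$ via its generating function $\sum_k \chi_{U_{m,k}}(t)x^k$, and package the recursion for $\sum_{d}\Hat{Q}_{U_{m,d}}(t)x^d$ as in the generating-function method of Lu–Xie–Yang; then the closed form for $Q_{U_{m,d}}$ would follow by extracting coefficients from a product of known series. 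Either way, once the identity is in hand, conditions $(i')$, $(ii')$, $(iii')$ are automatic, so by the uniqueness in Theorem \ref{inverseKL-prop} the displayed polynomial is indeed $Q_{U_{m,d}}(t)$; I would also remark that nonnegativity of the $c_{d,j}$ for $j<d/2$ is visible on inspection, matching the paper's later conjecture.
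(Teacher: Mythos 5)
Your proposal follows essentially the same route as the paper: specialize the recursion \eqref{inv_KL_equa_2} to $U_{m,d}$ using the flat structure (localizations $B_i$ with $\Hat{Q}_{B_i}=(-1)^i$, contractions $U_{m,d-i}$) together with the classical formula for $\chi_{U_{m,k}}(t)$, and then extract the coefficients of $t^j$ for $j<d/2$, where the term $t^{d}\Hat{Q}_{U_{m,d}}(t^{-1})$ cannot contribute. The binomial summation you defer to Chu--Vandermonde (or Zeilberger) is precisely the computation the paper performs -- its reduction of $a_{m,d}-b_{m,d}$ via Vandermonde's convolution -- so the argument is correct and identical in substance.
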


\begin{proof}
It suffices to show for $0 \leq j \leq \lfloor (d-1) /2\rfloor$, the coefficient of $t^j$ in $\Hat{Q}_{U_{m,d}}(t)$ is
$$(-1)^d\frac{m(d-2j)}{(m+j)(m+d-j)}\binom{m+d}{d}\binom{d}{j}.$$
Applying
equation \eqref{inv_KL_equa_2}
to uniform matroids, we get that
$$\Hat{Q}_{U_{m,d}}(t)
	=\sum_{F\in L(U_{m,d})}t^{\mathrm{rk} \, (U_{m,d})_F}\Hat{Q}_{(U_{m,d})_F}(t^{-1}) \cdot\chi_{(U_{m,d})^F}(t).
$$

Given  $0 \leq i <d$, let $F$ denote any flat of  $U_{m,d}$ with $\mathrm{rk} \, (U_{m,d})_F=i$. It is obvious that the localization $(U_{m,d})_F\cong B_i$ and the contraction $(U_{m,d})^F\cong U_{m,d-i}$,  where  $B_0$ is the unique matroid of rank $0$.
Moreover, we note that  there are $\binom{m+d}{i}$ such flats.
For $i=d$, there is only one flat of rank $d$ which is in fact the ground set $E$.
Hence, it follows that
\begin{align*}
		\Hat{Q}_{U_{m,d}}(t)=\sum_{i=0}^{d-1}\binom{m+d}{i} t^i\Hat{Q}_{B_i}(t^{-1}) \cdot \chi_{U_{m,d-i}}(t)+t^d\Hat{Q}_{U_{m,d}}(t^{-1}),
\end{align*}
which is equivalent to
$$\Hat{Q}_{U_{m,d}}(t)-t^d\Hat{Q}_{U_{m,d}}(t^{-1})=\sum_{i=0}^{d-1}\binom{m+d}{i} t^i\Hat{Q}_{B_i}(t^{-1}) \cdot \chi_{U_{m,d-i}}(t).$$
For $0\leq i \leq d-1$,  from Proposition \ref{Bollean_IKL} it follows that
$$\Hat{Q}_{B_i}(t)=(-1)^i.$$
Meanwhile, it is  known that
\begin{align*}
\chi_{U_{m,d-i}}(t)
	=\sum_{j=0}^{d-i-1}(-1)^j\binom{m+d-i}{j}(t^{d-i-j}-1);
\end{align*}
see \cite[p.121]{White_Neil}.
Hence, we obtain
\begin{align*}
\Hat{Q}_{U_{m,d}}(t)-t^d\Hat{Q}_{U_{m,d}}(t^{-1})
     &=\sum_{i=0}^{d-1} \left((-1)^i\binom{m+d}{i} t^i
	      \times \sum_{j=0}^{d-i-1}(-1)^j\binom{m+d-i}{j}(t^{d-i-j}-1)\right)\\[8pt]
	&=\sum_{i=0}^{d-1}\sum_{j=0}^{d-i-1}(-1)^{i+j}\binom{m+d}{i,j,m+d-i-j}(t^{d-j}-t^i),
	\end{align*}
where $\binom{n}{a_1, a_2, \ldots,a_k}=\frac{n!}{a_1!a_2!\cdots a_k!}$ is the multinomial
coefficient.
Let
\begin{align*}
a_{m,d}&=\sum_{i=0}^{d-1}\sum_{j=0}^{d-i-1}(-1)^{i+j}\binom{m+d}{i,j,m+d-i-j}t^{d-j}\\[5pt]
b_{m,d}&=\sum_{i=0}^{d-1}\sum_{j=0}^{d-i-1}(-1)^{i+j}\binom{m+d}{i,j,m+d-i-j}t^i.
\end{align*}
Thus
\begin{align}\label{eq-abmd}
\Hat{Q}_{U_{m,d}}(t)-t^d\Hat{Q}_{U_{m,d}}(t^{-1})=a_{m,d}-b_{m,d}.
\end{align}
We proceed to reduce the above double summations to single summations. Interchanging the order of the summation for $a_{m,d}$ and then substituting $j$ for $d-j$ lead to
\begin{align*}
a_{m,d}=&\sum_{j=0}^{d-1}\sum_{i=0}^{d-j-1}(-1)^{i+j}\binom{m+d}{i,j,m+d-i-j}t^{d-j}\\[5pt]
	=&\sum_{j=1}^{d}\sum_{i=0}^{j-1}(-1)^{i+d-j}\binom{m+d}{i,d-j,m+j-i}t^{j}.
\end{align*}
Note that
\begin{align}
    \sum_{i=0}^{j-1}(-1)^{i+d-j}\binom{m+d}{i,d-j,m+j-i}=&(-1)^{d-1}\binom{m+d}{d-j} \cdot \sum_{i=0}^{j-1}(-1)^{j-1-i}\binom{m+j}{i}\nonumber\\[5pt]
    =&(-1)^{d-1}\binom{m+d}{d-j}\cdot \sum_{i=0}^{j-1}\binom{-1}{j-1-i}\binom{m+j}{i}\nonumber\\[5pt]
    =&(-1)^{d-1}\binom{m+d}{d-j}\binom{m+j-1}{j-1},\label{eq-conv-form}
    \end{align}
where the last equality is obtained by Vandermonde's convolution.

Now if we set $\binom{n}{-1}=0$ for any $n\geq 0$ by convention, then
\begin{align}\label{eq-amd}
a_{m,d}=&\sum_{j=0}^{d}(-1)^{d-1}\binom{m+d}{d-j}\binom{m+j-1}{j-1}t^{j}.
\end{align}
For $b_{m,d}$, by interchanging the indices $i$ and $j$ and applying \eqref{eq-conv-form}, we obtain
\begin{align}\label{eq-bmd}
b_{m,d}&=\sum_{j=0}^{d-1}\sum_{i=0}^{d-j-1}(-1)^{i+j}\binom{m+d}{i,j,m+d-i-j}t^j \notag \\[5pt]
&=\sum_{j=0}^{d}(-1)^{d-1}\binom{m+d}{j}\binom{m+d-j-1}{d-j-1} t^j.
\end{align}

Combining \eqref{eq-abmd}, \eqref{eq-amd} and \eqref{eq-bmd}, we obtain
\begin{align*}
	\Hat{Q}_{U_{m,d}}(t)&-t^d\Hat{Q}_{U_{m,d}}(t^{-1})\\[5pt]
=(-1)^{d-1}&\sum_{j=0}^{d}\left(\binom{m+d}{d-j}\binom{m+j-1}{j-1}-\binom{m+d}{j}\binom{m+d-j-1}{d-j-1}\right)t^j.
\end{align*}
Note that the degree of $\Hat{Q}_{U_{m,d}}(t)$ is strict less than $\frac{d}{2}$ and hence the degree of lowest term in  $t^d\Hat{Q}_{U_{m,d}}(t^{-1})$ is strict greater than $\frac{d}{2}$. Thus, for any $0\leq j < \frac{d}{2}$, we have
\begin{align*}
[t^j]\Hat{Q}_{U_{m,d}}(t)
&=(-1)^{d-1}\left(\binom{m+d}{d-j}\binom{m+j-1}{j-1}-\binom{m+d}{j}\binom{m+d-j-1}{d-j-1}\right)\\[5pt]
&=(-1)^{d}\frac{m(d-2j)}{(m+j)(m+d-j)}\binom{m+d}{d}\binom{d}{j},
\end{align*}
as desired. This completes the proof.
\end{proof}

Note that the proof of Theorem \ref{main-thm-uniform} only relies on the evaluation of the characteristic polynomials for uniform matroids and the inverse Kazhdan-Lusztig polynomials for boolean matroids. Once Theorem \ref{main-thm-uniform} is established, we find that it is very easy to compute the Kazhdan-Lusztig polynomials for uniform matroids. Note that several formulas for $P_{U_{m,d}}(t)$ have been obtained; see  Gedeon, Proudfoot and Young \cite{Geden2017jcta1}, Gao, Lu, Xie, Yang and Zhang \cite{glxyz_2018arxiv}.  The following result provides a new formula for $P_{U_{m,d}}(t)$.

\begin{cor}\label{uniform_KL-from_Q}
	For any $m,d \geq 1$, we have
\begin{align*}
	P_{U_{m,d}}(t)=\sum_{j=0}^{\lfloor (d-1) /2\rfloor}\sum_{i=0}^{d-1-2j}
	(-1)^{d+1-i}\frac{m(d-i-2j)}{(m+j)(m+d-i-j)}\binom{m+d}{m,i,j,d-i-j}t^j.
\end{align*}
\end{cor}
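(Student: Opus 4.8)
The plan is to combine the explicit formula for the inverse Kazhdan-Lusztig polynomial of a uniform matroid (Theorem \ref{main-thm-uniform}, together with its boolean companion Corollary \ref{Bollean_IKL}) with the conversion identity \eqref{from_Q_to_P} from Theorem \ref{KL-inverse-KL-2}, which expresses $P_M(t)$ as a sum over flats $F\neq E$ of $P_{M_F}(t)\cdot(-1)^{\mathrm{rk}\,M^F}Q_{M^F}(t)$. Specializing $M=U_{m,d}$, a flat $F$ with $\mathrm{rk}\,(U_{m,d})_F=i$ for $0\le i\le d-1$ gives $(U_{m,d})_F\cong B_i$ and $(U_{m,d})^F\cong U_{m,d-i}$, and there are $\binom{m+d}{i}$ such flats. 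Since $P_{B_i}(t)=1$ for all $i$ (boolean matroids have trivial Kazhdan-Lusztig polynomial), the identity \eqref{from_Q_to_P} collapses to
\begin{align*}
P_{U_{m,d}}(t)=-\sum_{i=0}^{d-1}\binom{m+d}{i}(-1)^{\mathrm{rk}\,U_{m,d-i}}Q_{U_{m,d-i}}(t)=-\sum_{i=0}^{d-1}\binom{m+d}{i}(-1)^{d-i}Q_{U_{m,d-i}}(t),
\end{align*}
where the $i=0$ term must be treated with care when $d-i$ could be read as a uniform matroid — here $d-i\ge 1$ throughout, so every $U_{m,d-i}$ is genuinely a positive-rank uniform matroid and Theorem \ref{main-thm-uniform} applies directly.

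Next I would substitute the closed form $Q_{U_{m,d-i}}(t)=\binom{m+d-i}{d-i}\sum_{\ell}\frac{m(d-i-2\ell)}{(m+\ell)(m+d-i-\ell)}\binom{d-i}{\ell}t^\ell$ into the display above, reindex by setting $\ell=j$, and interchange the order of summation so that $j$ becomes the outer index. The coefficient of $t^j$ then becomes
\begin{align*}
[t^j]P_{U_{m,d}}(t)=-\sum_{i=0}^{d-1-2j}(-1)^{d-i}\binom{m+d}{i}\binom{m+d-i}{d-i}\frac{m(d-i-2j)}{(m+j)(m+d-i-j)}\binom{d-i}{j},
\end{align*}
the upper limit $d-1-2j$ coming from the constraint $j\le\lfloor(d-i-1)/2\rfloor$. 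The remaining task is purely a binomial bookkeeping step: one checks that $\binom{m+d}{i}\binom{m+d-i}{d-i}\binom{d-i}{j}=\binom{m+d}{m,i,j,d-i-j}$ by expanding both sides into factorials, and that the sign $-(-1)^{d-i}$ equals $(-1)^{d+1-i}$. After these identifications the expression matches the stated formula for $P_{U_{m,d}}(t)$ term by term.

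The only genuinely delicate point — and the step I would flag as the main obstacle — is making sure the range of summation and the degree bound are handled consistently. One must verify that every $j$ appearing with a nonzero coefficient satisfies $j\le\lfloor(d-1)/2\rfloor$, so that the outer sum in the final formula indeed runs only up to $\lfloor(d-1)/2\rfloor$ (this forces $i\le d-1-2j$, which is $\ge 0$ exactly when $j\le\lfloor(d-1)/2\rfloor$), and conversely that no valid pair $(i,j)$ is dropped. It is also worth double-checking the edge behavior: the term $i=d-1$ contributes only when $j=0$, and one should confirm that the factor $d-i-2j$ never vanishes spuriously inside the stated range (it is zero exactly when $d-i$ is even and $j=(d-i)/2$, which lies just outside the range $j\le\lfloor(d-i-1)/2\rfloor$, so those terms are correctly excluded). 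Everything else is routine manipulation of multinomial coefficients, and no new structural input about matroids is needed beyond what Theorems \ref{KL-inverse-KL-2} and \ref{main-thm-uniform} and Corollaries \ref{Bollean_IKL} already supply.
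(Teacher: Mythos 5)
Your proposal is correct and follows essentially the same route as the paper: specialize identity \eqref{from_Q_to_P} to $U_{m,d}$, use that the localizations are boolean with $P_{B_i}(t)=1$ and that the contractions are $U_{m,d-i}$ with $\binom{m+d}{i}$ flats of each rank, insert Theorem \ref{main-thm-uniform}, and interchange summations while collapsing $\binom{m+d}{i}\binom{m+d-i}{d-i}\binom{d-i}{j}$ into the multinomial coefficient. The only cosmetic difference is that the paper works with $\Hat{Q}$ directly while you carry the sign $(-1)^{\mathrm{rk}}$ explicitly, which amounts to the same computation.
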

\proof
Applying Theorem \ref{KL-inverse-KL-2} to $U_{m,d}$, we obtain
$$P_{U_{m,d}}(t)=-\sum_{E \neq F\in L(U_{m,d})}P_{(U_{m,d})_F}(t)\Hat{Q}_{(U_{m,d})^F}(t)
=-\sum_{i=0}^{d-1}\binom{m+d}{i}P_{B_i}(t)\Hat{Q}_{U_{m,d-i}}(t).$$
Note that, for $0 \leq i \leq d-1$,  we have $P_{B_i}(t)=1$; see \cite[Proposition 2.7]{ElisKL2016adv}.
Theorem \ref{main-thm-uniform} tells us that
$$\Hat{Q}_{U_{m,d-i}}(t)=(-1)^{d-i}\sum_{j=0}^{\lfloor (d-i-1) /2\rfloor}\frac{m(d-i-2j)}{(m+j)(m+d-i-j)}\binom{m+d-i}{d-i}\binom{d-i}{j}t^j.$$
It follows that
\begin{align*}
	P_{U_{m,d}}(t)
	&=\sum_{i=0}^{d-1}\binom{m+d}{i}\sum_{j=0}^{\lfloor (d-i-1) /2\rfloor}\frac{(-1)^{d+1-i}m(d-i-2j)}{(m+j)(m+d-i-j)}\binom{m+d-i}{d-i}\binom{d-i}{j}t^j.
\end{align*}
Interchanging the order of the summation, we get the desired result.
\qed

\section{Open problems}\label{sec-4}

Since the matroid Kazhdan-Lusztig polynomials and the inverse Kazhdan-Lusztig polynomials are defined in the same manner, it is natural to ask whether they have some common properties. In this section we shall propose several conjectures for inverse Kazhdan-Lusztig polynomials parallel to those for matroid Kazhdan-Lusztig polynomials.

The first conjecture is concerned with the non-negativity of the coefficients of $Q_{M}(t)$. Elias, Proudfoot and
Wakefield \cite[Conjecture 2.3]{ElisKL2016adv} conjectured that the coefficients of $P_{M}(t)$ for any matroid $M$ are non-negative.
We have the following conjecture.

\begin{conj}\label{conj-non-negative}
For any matroid $M$ the coefficients of $Q_{M}(t)$ are non-negative.
\end{conj}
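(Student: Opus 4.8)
\noindent\emph{Towards Conjecture \ref{conj-non-negative}.} Since this is a conjecture, what follows is a strategy rather than a complete argument. By Lemma \ref{sum} it suffices to treat connected matroids, and by the invariance observation around \eqref{eq-key-identity} the function $\Hat{Q}$ restricted to any interval $[F,G]\subseteq L(M)$ agrees with $\Hat{Q}$ of a genuine matroid, so the recursion \eqref{inv_KL_equa_2} is self-similar and any induction on $\mathrm{rk}\,M$ is available. The goal is to show that $\Hat{Q}_M(t)=(-1)^{\mathrm{rk}\,M}Q_M(t)$ has all coefficients of sign $(-1)^{\mathrm{rk}\,M}$, equivalently that $Q_M(t)\in\mathbb{Z}_{\ge 0}[t]$.

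The first avenue is inductive and combinatorial. Start from \eqref{from_P_to_Q}, namely $Q_M(t)=-\sum_{\emptyset\neq F\in L(M)}(-1)^{\mathrm{rk}\,M_F}P_{M_F}(t)Q_{M^F}(t)$, feed in the nonnegativity of $P_{M_F}(t)$ (Conjecture 2.3 of \cite{ElisKL2016adv}) and the inductive nonnegativity of $Q_{M^F}(t)$. The signs $-(-1)^{\mathrm{rk}\,M_F}$ are not all $+1$, so the right-hand side is not termwise nonnegative, and one must extract a genuinely positive formula by exploiting cancellation. A natural tool is the fact, special to geometric lattices, that $\mu(\emptyset,F)$ has sign $(-1)^{\mathrm{rk}\,M_F}$ (Rota), which already makes $t^{\mathrm{rk}\,M}\chi_M(t^{-1})=\overline{\chi}_{\emptyset,E}(t)$ nonnegative; the plan would be to propagate this sign-regularity of $\overline{\chi}$ through the convolution $\Hat{Q}=\overline{f^{-1}}\,\chi$ obtained in the second Remark, combining \eqref{from_P_to_Q} with its dual so that the alternating contributions reassemble into manifestly nonnegative combinatorial quantities. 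The uniform case of Theorem \ref{main-thm-uniform} is exactly such a collapse — the double alternating sum there telescopes, via Vandermonde's convolution, to $\binom{m+d}{d}\frac{m(d-2j)}{(m+j)(m+d-j)}\binom{d}{j}\ge 0$ — and serves as a template for what a positive formula should look like in general.

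The second avenue is geometric. Recall from \eqref{Q_relation_bar} and \eqref{eq-key-identity_2} that $f^{-1}=(-1)^{\mathrm{rk}}\Hat{Q}$ is the left Kazhdan--Lusztig--Stanley function of the kernel $\overline{\chi}=\chi^{-1}$. For the characteristic kernel $\chi$ itself, positivity of the associated KLS function $f$ (that is, of $P_M$) is known because $\chi$ is realized by intersection cohomology: of the reciprocal plane in the realizable case, and of a suitable module over the Möbius algebra of $M$ in general. The plan would be to construct a corresponding "inverse" object whose intersection cohomology, up to the global sign $(-1)^{\mathrm{rk}\,M}$, has Poincaré polynomial $\Hat{Q}_M$ — concretely, to identify $\overline{\chi}$ as the KLS kernel of a stratified space Verdier- or Koszul-dual to the reciprocal plane, or of a costandard module in the singular Hodge-theoretic framework for matroids — and then to deduce nonnegativity from purity and parity vanishing. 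The multiplicativity of $Q$ over direct sums (Lemma \ref{sum}) and the compatibility of \eqref{inv_KL_equa_2} with contraction and localization make such a module-theoretic construction plausible.

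The main obstacle, in either avenue, is precisely the one that makes the Elias--Proudfoot--Wakefield nonnegativity conjecture hard: the defining recursion for $\Hat{Q}$ carries large sign cancellation, and no positive closed formula is known beyond boolean matroids, uniform matroids, and matroids built from them by direct sums; likewise no geometric model carrying $Q_M$ has yet been exhibited, even for realizable matroids. Pending such a model, the conjecture can at present only be checked class by class — for instance for fan, wheel and whirl matroids, or sparse paving matroids, by combining the known formulas for $P_M$ with Theorem \ref{KL-inverse-KL-2} — which we view as supporting evidence rather than a proof.
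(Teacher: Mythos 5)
You were asked about Conjecture \ref{conj-non-negative}, and it is important to be clear that the paper contains no proof of it: the authors only establish it for boolean matroids (Corollary \ref{Bollean_IKL}), for uniform matroids (Theorem \ref{main-thm-uniform}), for direct sums of such via Lemma \ref{sum}, and report computer verification for graphic matroids on at most nine vertices. Your submission correctly refuses to claim a proof and instead surveys strategies, so there is no gap in the sense of a wrong step being passed off as a proof; what you offer is commentary, and it is broadly consistent with the evidence the paper itself assembles (the uniform case as a template where the alternating double sum collapses by Vandermonde's convolution, multiplicativity reducing to connected matroids, and class-by-class checks via Theorem \ref{KL-inverse-KL-2} as supporting evidence). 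Your second, geometric avenue is also the natural one given that $f^{-1}$ is the left Kazhdan--Lusztig--Stanley function of the kernel $\overline{\chi}$, but the paper does not pursue it, and nothing in the paper supplies the required model.

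One concrete inaccuracy in your sketch should be fixed: you assert that Rota's sign theorem "already makes $t^{\mathrm{rk}\,M}\chi_M(t^{-1})=\overline{\chi}_{\emptyset,E}(t)$ nonnegative." It does not. Rota gives $(-1)^{\mathrm{rk}\,M_F}\mu(\emptyset,F)\geq 0$, so the coefficients of $\chi_M(t)$ alternate in sign, and reversing the polynomial by $t\mapsto t^{-1}$ and multiplying by $t^{\mathrm{rk}\,M}$ preserves that alternation; the correct statement is sign-regularity of $\overline{\chi}$ (equivalently, nonnegativity of $(-1)^{\mathrm{rk}\,M}\chi_M(-t)$), not nonnegativity of $\overline{\chi}_{\emptyset,E}(t)$ itself. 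This matters because the whole difficulty you identify --- cancellation in the recursions \eqref{from_P_to_Q} and \eqref{inv_KL_equa_2} --- is precisely a consequence of these persistent alternating signs, so the proposed propagation argument cannot start from a false positivity statement about $\overline{\chi}$. With that correction, your text stands as a reasonable discussion of why the conjecture is open, on par with the paper's own treatment, but it neither proves the conjecture nor diverges from the paper in any way that yields new partial results.
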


From Corollary \ref{Bollean_IKL} and Theorem \ref{main-thm-uniform}, it is obvious that Conjecture \ref{conj-non-negative} is true for boolean matroids and uniform matroids. We verified this conjecture  for graphic matroids on $n$ vertices for $n\leq 9$. Note that
Elias, Proudfoot and
Wakefield \cite{ElisKL2016adv} already proved the non=-negativity of $P_{M}(t)$ for any representable matroid $M$, and Braden, Huh, Matherne, Proudfoot, and Wang \cite{bran_huh_2020_arxiv} claimed that this is true for any matroid $M$.

Elias, Proudfoot and Wakefield \cite[Conjecture 2.5]{ElisKL2016adv} also conjectured  that
for any matroid $M$ the coefficients of $P_M(t)$ form a log-concave sequence with no internal zeros.
Recall that a polynomial $f(t)=\sum_{i=0}^n a_it^i$
is said to be log-concave if $a_i^2 \geq a_{i-1}a_{i+1}$ for any $0<i<n$ and
it is said to have no internal zeros if there are no three indices $0\leq i <j <k \leq n$ such
that $a_i,a_k\neq 0$ and $a_j=0$. For the inverse Kazhdan-Lusztig polynomials for matroids, we propose the following conjecture.

\begin{conj}\label{conj-log-concave}
For any matroid $M$ the coefficients of $Q_{M}(t)$ form a log-concave sequence with no internal zeros.
\end{conj}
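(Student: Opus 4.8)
Since the final statement is a conjecture, the aim here is to outline the attack I would mount rather than a complete argument, and in particular to dispose rigorously of the cases where $Q_M$ is already known in closed form.

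\textbf{The verified families.} For a boolean matroid Corollary~\ref{Bollean_IKL} gives $Q_{B_n}(t)=1$, which is log-concave with no internal zeros trivially. For a uniform matroid $U_{m,d}$, Theorem~\ref{main-thm-uniform} gives the coefficient of $t^j$ as
\begin{align*}
q_j \;=\; \binom{m+d}{d}\,\frac{m(d-2j)}{(m+j)(m+d-j)}\binom{d}{j},\qquad 0\le j\le\Big\lfloor\tfrac{d-1}{2}\Big\rfloor .
\end{align*}
Since $d-2j>0$ throughout this range, every $q_j>0$ and $q_j=0$ for larger $j$, so there are no internal zeros. For log-concavity I would form the ratio $r_j:=q_{j+1}/q_j$, a product of three linear-fractional factors in $j$,
\begin{align*}
r_j \;=\; \frac{d-2j-2}{d-2j}\cdot\frac{(m+j)(m+d-j)}{(m+j+1)(m+d-j-1)}\cdot\frac{d-j}{j+1},
\end{align*}
and show that $q_j^2\ge q_{j-1}q_{j+1}$ (equivalently $r_{j}\le r_{j-1}$) by clearing denominators and comparing the two resulting polynomials in $j,m,d$ — a routine but tedious cross-multiplication that I omit here. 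This settles Conjecture~\ref{conj-log-concave} for boolean and uniform matroids, and is the part of the conjecture I would actually write up in full.

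\textbf{The general case.} For an arbitrary matroid the situation parallels that of $P_M$: even the non-negativity in Conjecture~\ref{conj-non-negative} is open in the form stated here, and log-concavity is strictly stronger, so I would not expect an elementary proof. The only realistic route I see is to produce a Hodge-theoretic model for $Q_M$ analogous to the one Braden, Huh, Matherne, Proudfoot and Wang built for $P_M$. Equation~\eqref{Q_relation_bar} identifies $f^{-1}$ as the left Kazhdan--Lusztig--Stanley function of the dual kernel $\overline{\chi}$, so $\Hat{Q}_M(t)=(f^{-1})_{\emptyset,E}(t)$ ought to be realised as the Poincar\'e polynomial of a graded module over the Chow ring (or the augmented graded M\"obius algebra) of $M$ — a ``Koszul-dual'' companion to the intersection-cohomology module whose Poincar\'e polynomial is $P_M(t)$. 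Granting such a module, log-concavity would follow from Hodge--Riemann bilinear relations for it exactly as in the $P_M$ case, and the no-internal-zeros statement from the module being generated in degree $0$ together with injectivity of a Lefschetz operator below the middle. A combinatorial alternative would run an induction on the Braden--Vysogorets deletion--contraction recursion, using the recursions of Theorem~\ref{KL-inverse-KL-2} and \eqref{inv_KL_equa_2}, but it would still need a Hodge-theoretic positivity input at its base.

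\textbf{The main obstacle.} The central difficulty is precisely that the recursions linking $Q_M$ to $P_M$ in Theorem~\ref{KL-inverse-KL-2}, and the defining recursion~\eqref{inv_KL_equa_2}, all carry signs $(-1)^{\mathrm{rk}\,M_F}$; so one cannot deduce log-concavity of $Q_M$ from that of $P_M$ and of the characteristic polynomials (the latter known by Adiprasito--Huh--Katz) by any naive ``products and sums of log-concave sequences are log-concave'' argument — there is massive cancellation, and even the positivity of the coefficients is part of what must be proved. Thus the real content is the construction of the Hodge-theoretic model and the verification of its Hodge--Riemann relations; until that is done, I expect Conjecture~\ref{conj-log-concave} to remain, like its $P_M$ analogue, provable only for the special families above.
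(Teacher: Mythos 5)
The statement you were given is a conjecture, and the paper does not (and does not claim to) prove it in general; its substantive content in Section~\ref{sec-4} is exactly the verification you propose: boolean matroids via Corollary~\ref{Bollean_IKL}, uniform matroids via the explicit coefficients of Theorem~\ref{main-thm-uniform}, plus computer checks for small graphic and simple matroids. Your treatment of the provable cases follows the same route as the paper's proposition on $Q_{U_{m,d}}(t)$ (positivity from $d-2j>0$, log-concavity from the ratio $q_j^2/(q_{j-1}q_{j+1})$), and your speculative Hodge-theoretic program for general $M$ is reasonable commentary but, as you say, not a proof — which is consistent with the statement's status.

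The one real shortcoming is that the step you wave off as ``routine but tedious cross-multiplication'' is the entire mathematical content of the paper's uniform-matroid proposition, and it is slightly less automatic than you suggest: writing
\begin{equation*}
\frac{q_j^2}{q_{j-1}q_{j+1}}=\frac{(d-2j)^2}{(d-2j-2)(d-2j+2)}\cdot\frac{(j+1)(d-j+1)}{j(d-j)}\cdot\frac{\bigl((m+j)^2-1\bigr)\bigl((m+d-j)^2-1\bigr)}{(m+j)^2(m+d-j)^2},
\end{equation*}
the first two factors are $\ge 1$ but the third is $<1$, so a naive factor-by-factor comparison fails and the inequality is not a bare sign check after clearing denominators. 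The paper resolves this by using $m\ge 1$ to bound the third factor below by $\frac{j(j+2)(d-j)(d-j+2)}{(j+1)^2(d-j+1)^2}$, after which the product is at least $\frac{(j+2)(d-j+2)(d-2j)^2}{(j+1)(d-j+1)(d-2j-2)(d-2j+2)}\ge 1$. Your planned brute-force comparison of polynomials in $j,m,d$ would in fact succeed (the inequality is true on the relevant range $0<j<\lfloor(d-1)/2\rfloor$), but until that computation, or a grouping argument like the paper's, is actually carried out, your write-up establishes the conjecture only for boolean matroids; you should include this verification explicitly.
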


We verified this conjecture  for graphic matroids on $n$ vertices for $n\leq 9$, and for simple matroids with $n$ elements for $n\leq 9$.
From Corollary \ref{Bollean_IKL}, it is easy to verify Conjecture \ref{conj-log-concave} for boolean matroids.
Next we will show that Conjecture \ref{conj-log-concave} holds for uniform matroids.

\begin{prop}
For any $m,d\geq 1$ the polynomial $Q_{U_{m,d}}(t)$ is log-concave and has no internal zeros.
\end{prop}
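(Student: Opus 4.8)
By Theorem~\ref{main-thm-uniform} we have, up to the positive constant $\binom{m+d}{d}$,
\[
Q_{U_{m,d}}(t)=\binom{m+d}{d}\sum_{j=0}^{\lfloor (d-1)/2\rfloor} c_j\, t^j,
\qquad
c_j:=\frac{m(d-2j)}{(m+j)(m+d-j)}\binom{d}{j}.
\]
The plan is to work directly with the explicit coefficients $c_j$. First I would observe that for $0\le j\le \lfloor (d-1)/2\rfloor$ the factor $d-2j$ is a positive integer and all other factors are positive, so $c_j>0$ for every such $j$; in particular the support of $Q_{U_{m,d}}(t)$ is the full interval $\{0,1,\dots,\lfloor(d-1)/2\rfloor\}$, which immediately gives that there are no internal zeros. (One should also note the degenerate case $d=1$, where $Q_{U_{m,1}}(t)$ is a constant and there is nothing to check.)

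For log-concavity I must show $c_j^2\ge c_{j-1}c_{j+1}$ for $0<j<\lfloor(d-1)/2\rfloor$. The key step is to compute the ratio $c_{j+1}/c_j$ explicitly and show it is (weakly) decreasing in $j$. Using $\binom{d}{j+1}/\binom{d}{j}=(d-j)/(j+1)$, one gets
\[
\frac{c_{j+1}}{c_j}
=\frac{d-2j-2}{d-2j}\cdot\frac{(m+j)(m+d-j)}{(m+j+1)(m+d-j-1)}\cdot\frac{d-j}{j+1}.
\]
It then suffices to verify that each of the three displayed factors is nonincreasing in $j$ on the relevant range, since a product of positive nonincreasing functions is nonincreasing. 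The factor $(d-j)/(j+1)$ is clearly decreasing. For $(d-2j-2)/(d-2j)=1-\tfrac{2}{d-2j}$: as $j$ increases, $d-2j$ decreases, so $2/(d-2j)$ increases, hence this factor decreases. The middle factor $\frac{(m+j)(m+d-j)}{(m+j+1)(m+d-j-1)}$ requires a short computation: writing it as $g(j)/g(j+1)$ would be circular, so instead I would expand numerator and denominator and check that the difference (denominator minus numerator, cross-multiplied across consecutive $j$) has a sign that is constant; concretely, $(m+j)(m+d-j)-(m+j+1)(m+d-j-1)=d-2j-1>0$ on our range, and a further cross-multiplication shows the ratio is nonincreasing. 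Once $c_{j+1}/c_j$ is shown nonincreasing, $c_j^2\ge c_{j-1}c_{j+1}$ is immediate.

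The main obstacle is purely the bookkeeping in the middle factor: one has to be careful that the inequality $c_{j+1}/c_j \ge c_{j+2}/c_{j+1}$ holds all the way up to the last index $j+1=\lfloor(d-1)/2\rfloor$, where $d-2j-2$ can equal $0$ or $1$ depending on the parity of $d$, and the top coefficient $c_{\lfloor(d-1)/2\rfloor}$ must be compared correctly with its predecessor. I would handle the two parities $d$ even and $d$ odd separately at the very end, checking the boundary inequality $c_{k}^2\ge c_{k-1}c_{k+1}$ with $k=\lfloor(d-1)/2\rfloor-1$ by hand (noting $c_{k+1}$ is the last nonzero coefficient). Everything else reduces to elementary manipulation of binomial ratios and linear inequalities in $j$, $m$, $d$.
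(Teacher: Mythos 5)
Your treatment of positivity and the absence of internal zeros is fine: every coefficient $c_j=\frac{m(d-2j)}{(m+j)(m+d-j)}\binom{d}{j}$ with $0\le j\le\lfloor (d-1)/2\rfloor$ is visibly positive, so the support is an interval.

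The log-concavity argument, however, has a genuine gap. Reducing $c_j^2\ge c_{j-1}c_{j+1}$ to the statement that $c_{j+1}/c_j$ is nonincreasing is legitimate (the two are equivalent for positive sequences), but your factorwise monotonicity claim fails for the middle factor. Writing $u_j=(m+j)(m+d-j)$, your middle factor is $u_j/u_{j+1}$, and since $u_j$ is a concave (hence log-concave) positive quadratic in $j$, one has $u_{j+1}^2\ge u_ju_{j+2}$, i.e.\ $u_j/u_{j+1}\le u_{j+1}/u_{j+2}$: this factor is \emph{increasing} in $j$, not nonincreasing. A concrete check with $m=1$, $d=6$ gives the values $7/12$, $12/15$, $15/16$ at $j=0,1,2$. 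Your supporting computation also contains a sign error: $(m+j)(m+d-j)-(m+j+1)(m+d-j-1)=2j+1-d$, which is negative on the relevant range, not $d-2j-1>0$. Since two of your three factors decrease while the third increases, the conclusion cannot be read off factor by factor; one needs a quantitative comparison showing that the decrease of the outer factors beats the increase of the middle one. This is exactly what the paper's proof does: it splits $c_j$ (up to a $j$-independent constant) into $a_j=\frac{d-2j}{j!(d-j)!}$ and $b_j=\frac{1}{(m+j)(m+d-j)}$, computes $\frac{a_j^2}{a_{j-1}a_{j+1}}$ exactly, bounds $\frac{b_j^2}{b_{j-1}b_{j+1}}=\bigl(1-\tfrac{1}{(m+j)^2}\bigr)\bigl(1-\tfrac{1}{(m+d-j)^2}\bigr)$ from below using $m\ge 1$, and verifies that the product of the two ratios is at least $1$. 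To repair your proof you would need an analogous explicit estimate; the parity discussion at the top index that you flag as the ``main obstacle'' is not actually the problem.
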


\proof Suppose that
$$Q_{U_{m,d}}(t):=\sum_{j=0}^{\lfloor (d-1) /2\rfloor}c_{m,d}^jt^j.$$
By Theorem \ref{main-thm-uniform} we see that, for $0 \leq j \leq \lfloor (d-1) /2\rfloor$,
$$c_{m,d}^j=\frac{m(d-2j)}{(m+j)(m+d-j)}\binom{m+d}{d}\binom{d}{j},$$
which can be rewritten as
$$c_{m,d}^j=\frac{d-2j}{j!(d-j)!} \times \frac{1}{(m+j)(m+d-j)} \times \frac{(m+d)!}{(m-1)!}.$$
Note that the last  factor on the right hand side is independent of $j$. Denote the first factor and the second factor by $a_j$ and $b_j$ respectively, namely,
\begin{align*}
a_j:&=\frac{d-2j}{j!(d-j)!},\qquad b_j:=\frac{1}{(m+j)(m+d-j)}.
\end{align*}
On one hand, for $0< j < \lfloor (d-1) /2\rfloor$, we have
\begin{align*}
\frac{a_j^2}{a_{j-1}a_{j+1}}
&=\frac{(j+1)(d-j+1)(d-2j)^2}{j(d-j)(d-2j-2)(d-2j+2)}.
\end{align*}
On the other hand, for $0< j < \lfloor (d-1) /2\rfloor$, we have
\begin{align*}
\frac{b_j^2}{b_{j-1}b_{j+1}}&= \frac{(m+j-1)(m+j+1)(m+d-j-1)(m+d-j+1)}{(m+j)^2(m+d-j)^2} \\[5pt]
&=\Big(1-\frac{1}{(m+j)^2}\Big)\Big(1-\frac{1}{(m+d-j)^2}\Big)\\[5pt]
&\geq \Big(1-\frac{1}{(1+j)^2}\Big)\Big(1-\frac{1}{(1+d-j)^2}\Big)\\[5pt]
&=\frac{j(j+2)(d-j)(d-j+2)}{(j+1)^2(d-j+1)^2}.
\end{align*}
Thus
\begin{align*}
\frac{(c_{m,d}^{j})^2}{c_{m,d}^{j-1} c_{m,d}^{j+1}}
&=\frac{a_j^2}{a_{j-1}a_{j+1}} \times \frac{b_j^2}{b_{j-1}b_{j+1}}\\[6pt]
&\geq
\frac{j+2}{j+1} \times \frac{d-j+2}{d-j+1} \times \frac{(d-2j)^2}{(d-2j-2)(d-2j+2)}
\geq 1,
\end{align*}
as desired.
\qed

Gedeon, Proudfoot and Young \cite[Conjecture 3.4]{Gedeon_proud_2017sem} conjectured that for any matroid M, all roots of $P_M(t)$ lie on the negative real axis.
However,  computer experiments showed that the inverse Kazhdan-Lusztig polynomials do not always have real zeros. %

\noindent{\bf Acknowledgements.} 
The  authors are very grateful to Arthur L.B. Yang for his helpful comments  on the improvement of this paper.
The first author is supported by the National Science Foundation of China (No.11801447).
The second author is supported  by the National Science Foundation of China (No.11901431).

\end{document}